\begin{document}

\title*{Empiric stochastic stability of physical and pseudo-physical measures.}
\author{Eleonora  Catsigeras\\ {\ \ } \\ {\em Dedicated to the memory of Prof. Welington de Melo}}
  \authorrunning{Eleonora  Catsigeras}
\institute{Instituto de Matem\'{a}tica y Estad\'{\i}stica \lq\lq Rafael Laguardia\rq\rq, Universidad de la Rep\'{u}blica. \at Av. J. Herrera y Reissig 565, Montevideo, URUGUAY. \\ \email{eleonora@fing.edu.uy}}

\maketitle

\abstract*{\texttt We define the empiric  stochastic stability of an invariant  measure   in the finite-time scenario, the classical definition of stochastic stability.  We prove that an invariant measure of a  continuous system  is empirically stochastically stable if and only if it is physical. We also define the empiric stochastic stability of a weak$^*$-compact set   of invariant measures  instead of a single measure.  Even when the system  has not  physical measures   it still has   minimal empirically stochastically stable sets  of measures.  We prove that such sets are necessarily composed by pseudo-physical measures. Finally, we apply the results to the one-dimensional C1-expanding case  to conclude that the measures of empirically stochastically   sets  satisfy Pesin Entropy Formula.}

\abstract{. We define the empiric  stochastic stability of an invariant  measure   in the finite-time scenario, the classical definition of stochastic stability.  We prove that an invariant measure of a  continuous system  is empirically stochastically stable if and only if it is physical. We also define the empiric stochastic stability of a weak$^*$-compact set   of invariant measures  instead of a single measure.  Even when the system  has not  physical measures   it still has   minimal empirically stochastically stable sets  of measures.  We prove that such sets are necessarily composed by pseudo-physical measures. Finally, we apply the results to the one-dimensional C1-expanding case  to conclude that the measures of empirically stochastically   sets  satisfy Pesin Entropy Formula.}

\section{Introduction}

 The purpose of this paper is to study a type of stochastic stability of invariant measures, which we   call \lq\lq empiric stochastic stability\rq\rq   for   continuous maps  $f: M \mapsto M$   on a compact Riemannian manifold $M$ of finite dimension, with or without boundary.  In particular, we are interested on the empirically stochastically stable measures of  one-dimensional continuous dynamical systems, and among them, the $C^1$-expanding maps on the circle.

 Let us denote by $(M, f)$ the deterministic (zero-noise) dynamical system obtained  by iteration of $f$, and by $(M, f, P_{\epsilon})$ the randomly perturbed system whose noise amplitude is $\epsilon$.
 Even if we  will work on a wide scenario which includes any continuous dynamical system   $(M, f)$,  we restrict the stochastic system  $(M, f, P_{\epsilon})$  by assuming that the noise probability distribution is uniform (i.e. it has constant density) on  all the  balls of radius $\epsilon >0$ of $M$  (for a precise statement of this assumption see formula (\ref{equationp_epsilon}) below).
   We call $\epsilon$ the \em noise level, \em or also the \em   amplitude of the random perturbation\em.
   To define the empiric stochastic stability we will take $\epsilon \rightarrow 0^+ $.

   In the stochastic system $(M, f, P_{\epsilon})$, the symbol $P_{\epsilon}$ denotes the family of  probability distributions, which  are called   \em transition probabilities, \em according to which the noise  is added to $f(x)$ for each    $x \in M$. Precisely, each transition probability is, for all $n \in \mathbb{N}$,  the  distribution  of the state $x_{n+1}$ of the noisy orbit conditioned to   $x_n = x$,   for each $x \in M$. As said above, the transition probability is supported on the ball with center at $f(x)$ and radius $\epsilon>0$. So, the zero-noise system $(M,f)$ is recovered  by taking $\epsilon= 0$; namely, $(M, f) =(M, f, P_0)$. The observer naturally expects that  if the amplitude $\epsilon >0$ of the random perturbation were small enough, then the ergodic properties of the stochastic system   \lq\lq remembered\rq\rq \ those of the zero-noise system.

  The foundation and tools to study the random perturbations of dynamical systems were early provided   in \cite{0-Ulam-vonNeumann1947}, \cite{Arnold}, \cite{Kifer1974Article}. The stochastic stability appears in the literature  mostly defined  through the stationary meaures $\mu_{\epsilon}$ of the stochastic system $(M, f, P_{\epsilon})$  Classically, the authors prove and describe, under particular  conditions, the existence and properties of the $f$-invariant measures that are the weak$^*$-limit of  ergodic stationary measures as  $\epsilon \rightarrow 0^+$.  See for instance the early results of \cite{Young}, \cite{Kifer1986}, \cite{Brin-Kifer}, \cite{Kifer1988}, \cite{Kifer1990Article-7}), and  the later  works of \cite{11-Liu-Qian}, \cite{8-AraujoTesis}, \cite{9-Araujo-Alves-Vasquez}, \cite{9-Araujo-Tahzibi}. For a review on stochastic and statistical stability of  randomly perturbed dynamical systems, see for instance \cite{12-Viana} and Appendix D of \cite{Bonatti-Diaz-Viana}.

  The stationary measures of the ramdom perturbations provide the probabilistic behaviour of the noisy system  asymptotically in the future.
Nevertheless,  from a   rather practical or experimental point of view  the concept of stochastic stability should not require the knowledge  a priori of the limit measures   of the perturbed system as $n \rightarrow + \infty$ .   For instance  \cite{Pierre-Sandro-numericalExperiment} presents numerical    experiments on the  stability of one-dimensional noisy systems in a finite time. The ergodic stationary measure is in fact substituted by an empirical (i.e. obtained after a finite-time observation of the system)  probability. Also in other applications of the theory of random systems  (see for instance \cite{50-Friedman1975}, \cite{51-Ikeda-Watanabe1981}), the stationary measures are usually unkown, are not directly obtained from the experiments, but substituted by  the   finite-time empiric probabilities which approximate  the stationary measures if the observations last   enough.

Summarizing, for a certain type of stochastically stable properties, one should not need   the infinite-time noisy orbits. Instead, one may   take   the noisy orbits \em up to a   large  finite  time $n$, which  \em  are indeed those that the experimenter observes and  predicts. The statistics of the observations and predictions   of the noisy orbits still reflect, for the experimenter and  the predictor, the behaviour   of the stochastic system, \em but only up to  some finite horizon. \em

Motivated by the above arguments, in Section \ref{sectionDefinitions} we will define \em the empiric stochastic stability. \em Roughly speaking, an $f$-invariant probability for the zero-noise system $(M, f)$ is empirically stochastically stable  if it approximates, up to an arbitrarily small error $\rho>0$, the statistics of sufficiently large pieces  of the noisy orbits, for  some fixed time $n$, provided that the noise-level $\epsilon>0$ is small enough  (see Definition \ref{definitionEmpiricStochasticStabilityMeasure}). This concept is a  reformulation in a \em finite-time \em scenario of one of  the usual definition   of  infinite-time  stochastic stability (see for instance \cite{Young}, \cite{Brin-Kifer}, \cite{9-Araujo-Alves-Vasquez}).

 \subsection{Setting the problem}

 Let $\epsilon > 0$ and $x \in M$. Denote by $B_{\epsilon}(x) \subset M$  the open ball of radius $\epsilon$ centered at $x$.  Consider  the Lebesgue measure $m$, i.e. the finite measure obtained from the volume form induced by the Riemannian structure of the manifold. For each point $x \in M$, we take the restriction of $m$ to the ball $B_ \epsilon (f(x))$. Precisely, we define  the probability measure $p_{\epsilon} (x, \cdot)$ by the following equality:
 \begin{equation} \label{equationp_epsilon} p_{\epsilon}(x, A) := \frac{m \big(A \cap B_{\epsilon}(f(x))\big)}{m\big(B_{\epsilon}(f(x))\big)} \ \ \forall \ A \in {\mathcal A},\end{equation}
 where ${\mathcal A}$ is the Borel sigma-algebra in $M$.

 \begin{definition} {\bf (Stochastic system with noise-level $\epsilon$.)}
  \label{definitionStochasticSystem}

  For each value of $\epsilon > 0$, consider the stochastic process or Markov chain $\{x_n\}_{n \in  \mathbb{N}} \subset M^{\mathbb{N}}$ in the   measurable space ${(M, {\mathcal A})}$ such that, for all $A \in {\mathcal A}$:
  $$\mbox{prob}(x_0 \in A) = m(A), \ \ \mbox{prob}(x_{n+1} \in A | x_n= x) = p_{\epsilon}(x, A),$$
  where $p_{\epsilon}(x, \cdot)$ is defined by equality (\ref{equationp_epsilon}).

  The system whose stochastic orbits are the Markov chains as above  is called \em stochastic system with noise-level $\epsilon$. \em   We denote it by $(M, f, P_{\epsilon})$, where
   $$P_{\epsilon} := \{p_{\epsilon}(x, \cdot)\}_{x \in M}.$$

  \end{definition}

  The stochastic systems with noise-level $\epsilon>0$ are usually studied  by assuming certain regularity of the zero-noise systems $(M, f)$,  and by taking the ergodic stationary measures $\mu_{\epsilon}$ of the stochastic system $(M, f, P_{\epsilon})$ (see for instance \cite{Young}).  When assuming that the transition probabilities satisfy  equality (\ref{equationp_epsilon}), all the stationary probability measures become absolutely continuous with respect to the Lebesgue measure $m$ (see for instance \cite{Vaienti}). Therefore, if a property   holds for the noisy orbits for $\mu_{\epsilon}$- a.e initial state $x \in M$ , it also holds  for a Lebesgue-positive set of states.

  When looking at the noisy system,  the experimenter usually obtains the values of  several bounded measurable functions $\varphi$, which are called \em observables\em, along the stochastic orbits $\{x_n\}_{n \in \mathbb{N}}$. From Definition \ref{definitionStochasticSystem}, the expected value of $\varphi$ at instant 0  is $E(\varphi)_0= \int \varphi(x_0) \, dm (x_0) $. Besides, from the   definition of the transition probabilities by equality (\ref{equationp_epsilon}), for any given state $x \in M$  the expected value  of $\varphi(x_{n+1})$  conditioned to $x_n = x$  is
 $   \int \varphi(y) \,  p_{\epsilon}(x, dy).$ So, in particular at instant 1  the expected value of $\varphi$  is
 $$E(\varphi)_1= \int \!\!\!\int \varphi(x_1)\,   p_{\epsilon}(x_0, dx_1) \, dm(x_0),$$
 and its expected value at instant $2$ is
 $$ E(\varphi)_2= \int \! \! \!\int \! \!\! \int  \varphi(x_2)  \, p_{\epsilon}(x_1, dx_2)\, p_{\epsilon}(x_0, dx_1) \, dm(x_0).$$ Analogously, by induction on $n$  we obtain that  for all $n \geq 1$, the expected value $E(\varphi)_n $ of the observable $\varphi$ is
 \begin{equation} \label{equationExpectedValue} E(\varphi)_n= \int \! \! \!\int \! \!\! \int ...\int \varphi(x_n) p_{\epsilon}(x_{n-1}, dx_{n})...\, p_{\epsilon}(x_1, dx_2)\, p_{\epsilon}(x_0, dx_1) \, dm(x_0).\end{equation}
Since the Lebesgue measure $m$ is not necessarily stationary for the   system $(M, f, P_{\epsilon})$, the expected value of the same function $\varphi$ at each instant $n$, if the initial distribution is $m$, may change with   $n$.

As said at the beginning, we assume that the experimenter  only sees the values of the observable functions  along   finite pieces  of  the noisy orbits because his experiment and his empiric observations can not last forever. When analyzing the statistics of the observed data, he considers for instance   the time average  of the collected observations along those finitely elapsed pieces of  randomly perturbed orbits. These time averages can be computed by the integrals of the observable functions with respect to certain probability measures, which are called    \em empiric stochastic probabilities for finite time $n$ \em (see Definition \ref{definitionEmpiricStochasticProb}). Precisely, for any any fixed time $n \geq 1$ and for any initial state $x_0 \in M$,   the empiric stochastic probability $\sigma_{\epsilon, n, x_0}$ is defined such that the  time average   of the expected values of any observable $\varphi$ at instants $1, 2, \ldots, n$  along the noisy orbit initiating at $x_0$, can be computed by the following equality:
$$  \frac{1}{n} \sum_{j= 1}^n E(\varphi(x_j)| {x_0}) = \int \varphi (y) d \sigma_{\epsilon, n, x_0}(y),$$
where
\begin{equation} \label{equationExpectedValueGivenx_0}E(\varphi(x_j)| {x_0}) = \int \!\!\! \int \ldots \int \varphi(x_j) \, p_{\epsilon}(x_{j-1}, dx_j) \ldots p_{\epsilon} (x_1, dx_2) p_{\epsilon} (x_0, dx_1).\end{equation}

We also assume that the experimenter only sees Lebesgue-positive sets  in the phase space $M$. So, when analyzing the statistics of the observed data in the noisy system, he will not observe all the empiric stochastic distributions $\sigma_{\epsilon, n, x}$, but only those for  Lebesgue-positive sets of initial states $x \in M$. If besides  he can only manage a finite set of continuous observable functions, then he will  not see the exact probability distributions, but some  weak$^*$ approximations to them   up to an error $\rho>0$, in the metric space ${\mathcal M}$ of probability measures.



 For some classes of mappings on the manifold $M$, even with high regularity (for instance Morse-Smale $C^{\infty}$ diffeomorphisms with two or more hyperbolic sinks), one single measure $\mu $ is not enough to approximate the empiric stochastic probabilities  of the noisy orbits  for  Lebesgue-a.e.   $x \in M$. The experimenter may need a set ${\mathcal K} $ composed by several probability measures  instead of a single measure.  Motivated by this phenomenon, we define the \em empiric stochastic stability  of  a weak$^*$-compact set  ${\mathcal K}$ \em of $f$-invariant probability measures (see Definition \ref{definitionEmpiricStochasticStabilitySetK}). This concept is similar to the empiric stochastic stability of a single measure, with two main changes: first, it substitutes the   measure $\mu$ by a weak$^*$-compact set $\mathcal K$ of probabilities; and second, it requires $\mathcal K$ be minimal with the property of empiric stochastic stability, when restricting  the stochastic system to a fixed Lebesgue-positive set of noisy orbits. In particular, a \em globally \em empirically stochastically stable set ${\mathcal K}$ of invariant measures minimally approximates the statistics of Lebesgue-a.e. noisy orbits. We will prove that it exists and is unique. 
 

\subsection{Main results.}

A classical concept in the ergodic theory of zero-noise dynamical systems is that of \em physical measures \em  \cite{14-PhysicalMeasures-Eckmann-Ruelle-1985}.   In brief, a physical measure is an $f$-invariant measure  $\mu$ whose basin of statistical attraction has positive Lebesgue measure. This basin is composed by the zero-noise orbits such that the  time average probability up to time $n$  converges to $\mu$ in the weak$^*$-topology as  $n \rightarrow + \infty$ (see Definitions \ref{definitionBasinStatisticalAttraction} and \ref{definitionPhysicalMeasures}).

One of the main purposes of this paper is to answer the following question:

\vspace{.3cm}

\noindent {\bf Question 1.}
 Is there some relation between the empirically stochastically stable   measures and the physical  measures?
If yes, how are they related?

\vspace{.1cm}

We will give an answer to this question in Theorem \ref{theoremEmpiricStochasticStableMeasure=Physical} and Corollary \ref{corollaryGlobalStabilityUniquePhysicalMeasure} (see Subsection \ref{subsectionStatementOfResults} for their precise statements). 
In particular, we will prove the following result:

\vspace{.3cm}

\noindent {\bf Theorem. }  
\em  An $f$-invariant measure is empirically stochastically stable if and only if it is physical. \em 

\vspace{.3cm}

A generalization of physical measures,  is the concept of \em pseudo-physical probability measures, \em which are sometimes also called SRB-like measures \cite{Polaca}, \cite{Portugalia}, \cite{Cat-StatisticalAttractors}.   They are defined such that, for all $\rho >0$, their weak$^*$ $\rho$-neighborhood, has a (weak) basin of statistical attraction with positive Lebesgue measure  (see Definitions \ref{definitionBasinStatisticalAttraction} and \ref{definitionPhysicalMeasures}).

To study this more general scenario of pseudo-physics, our second main purpose is to answer the following question:

\vspace{.3cm}

\noindent {\bf Question 2. }
Do empirically stochastically stable sets of measures relate with pseudo-physical measures?
If yes, how do they relate?

\vspace{.1cm}

We will give an answer to this question in Theorem \ref{theoremEmpricStochasticStableSetsSubsetPseudoPhysical}
and its corollaries, whose precise statements are in   Subsection \ref{subsectionStatementOfResults}.  
In particular, we will prove the following result:

\vspace{.3cm}

\noindent {\bf Theorem. }  
\em A weak$^*$-compact set of invariant probability measures is empirically stochastically stable  only if all its measures are pseudo-physical. Conversely, any pseudo-physical measure belongs to the unique globally empirically stochastically stable set of measures. \em

\newpage


\section{Definitions and statements} \label{sectionDefinitions} 

We denote by ${\mathcal M}$ the space of Borel probability measures on the manifold $M$, endowed with the weak$^*$-topology; and by ${\mathcal M}_f$ the subspace of $f$-invariant probabilities, where $(M, f)$ is the zero-noise dynamical system.
 Since the weak$^*$ topology in $\mathcal M$ is metrizable, we can choose and fix a metric $\mbox{dist}^*$ that endows that topology.


To make formula (\ref{equationExpectedValue}) and  other computations concise, it is convenient to introduce the following definition:

 \begin{definition}
 {\bf The transfer operators ${\mathcal L}_{\epsilon}$ and ${\mathcal L}^*_{\epsilon}$.}
 \label{definitioTransferOperators}

 Denote by $C^0(M, \mathbb{C})$ the space of complex continuous functions defined in $M$.  For the stochastic system $(M, f, P_{\epsilon})$, we define the \em transfer operator \em ${\mathcal L}_{\epsilon}: C^0(M, \mathbb{C}) \mapsto C^0(M, \mathbb{C})$  as follows:
 \begin{equation}
 \label{equationL} ({\mathcal L}_{\epsilon} \varphi) (x) := \int \varphi_(y) \, p_{\epsilon}(x, dy) \ \ \forall \ x \in M, \ \ \forall \ \varphi \in C^0(M, \mathbb{C}).\end{equation}

 From equality (\ref{equationp_epsilon}) it is easy to prove that $p_{\epsilon}(x, \cdot)$ depends continuously on $x \in M$ in the weak$^*$ topology. So, ${\mathcal L}_{\epsilon} \varphi$ is a continuous function for any $\varphi \in C^0(M, \mathbb{C})$.

 Through Riesz representation theorem, for any measure $\mu \in {\mathcal M}$  there exists a unique measure, which we denote by  $  {\mathcal L}^*_{\epsilon} \mu$, such that
 \begin{equation}
 \label{equationL^*}
 \int \varphi d({\mathcal L}_{\epsilon}^* \mu) := \int ({\mathcal L}_{\epsilon} \varphi)\, d \mu \ \ \ \forall \ \varphi \in C^0(M, \mathbb{C}).
 \end{equation}
 We call ${\mathcal L}_{\epsilon}^*:{\mathcal M} \mapsto {\mathcal M}$ \em the dual transfer operator \em or also,  \em the transfer operator in the space of  measures. \em
 \end{definition}
From the above definition, we obtain the following property for any \em observable function\em \ $\varphi \in C^0(M, \mathbb{C}) $: its expected value at the instant $n$ along the stochastic orbits with noise level $\epsilon$ is
$$E(\varphi)_n = \int ({{\mathcal L}_{\epsilon}}^n \varphi) \, d m = \int  \varphi  \, d({{\mathcal L}^*_{\epsilon}}^n m). $$

We are not only interested in the expected values of the observables $\varphi$, but also \em in the statistics \em (i.e time averages of the observables) along the  individual noisy orbits. 
With such a purpose, we first consider the following equality:
\begin{equation}
\label{equationL*delta_x}
({{\mathcal L}_{\epsilon}} ^n \varphi) (x) =  \int \varphi  \, d({{\mathcal L}^*_{\epsilon}}^n \delta_x)  \ \ \ \forall \ x \in M,\end{equation}
where $\delta_x$ denotes the Dirac probability measure supported on $\{x\}$.
Second,   we introduce the following concept of empiric probabilities for the stochastic system:

\begin{definition}{\bf Empiric stochastic probabilities.} \label{definitionEmpiricStochasticProb}

For any fixed instant $n \geq 1$, and for any initial state $x \in M$, we define the \em empiric stochastic probability $\sigma_{\epsilon, n,x} $ \em of the noisy orbit  with noise-level $\epsilon>0$, with initial state  $x$, and up to time $n$, as follows:
\begin{equation}
\label{equationEmpiricStochasticProb}
\sigma_{\epsilon, n,x} := \frac{1}{n} \sum_{j= 1}^{n} {{\mathcal L}_{\epsilon}^*}^j \delta_x.
\end{equation}
\end{definition}

Note that the empiric stochastic probabilities for Lebesgue almost $x \in M$   allow the computation of the time averages of the observable  $\varphi$ along the noisy   orbits.   Precisely,
\begin{equation} \label{equationInt_sigma_epsilon,n,x}\frac{1}{n} \sum_{j= 1}^{n} ({\mathcal L}_{\epsilon}^j \varphi) (x) = \int \varphi (y) \, d \sigma_{\epsilon, n,x}(y) \  \ \ \forall \ \varphi \in C^0(M, \mathbb{C}).\end{equation}

\begin{definition} \label{definitionEmpiricStochasticStabilityMeasure}
{\bf (Empiric stochastic stability of  a  measure)}

We call a probability measure $\mu \in {\mathcal M}_f$    \em empirically stochastically stable \em if there exists a measurable set $\widehat A  \subset M$ with positive Lebesgue measure such that:

For all $\rho > 0$ and for all $n \in  \mathbb{N}^+$ large enough   there exists $\epsilon_0>0$ (which  may depend on $\rho$ and on $n$ but not on $x$) satisfying
$$\mbox{dist}^*(\sigma_{\epsilon, n, x}, \ \mu) < \rho \  \ \ \ \forall \ 0 <\epsilon \leq \epsilon_0 , \mbox{ for Lebesgue a.e. }   x \in \widehat A.$$

\end{definition}
\begin{definition}
\label{definitionBasinWidehatA_mu} {\bf (Basin of empiric stochastic stability of a measure)}

For any probability measure $\mu$, we construct the following (maybe empty) set in the ambient manifold $M$:
\begin{equation}
\label{equation01}
 \widehat A_{\mu} :=\Big \{x \in M \colon \ \ \forall \rho >0 \ \exists \ N= N(\rho) \mbox{ such that } \forall \ n \geq N \ \exists \ \epsilon_0 = \epsilon_0(\rho, n) >0 \mbox { satisfying } $$ $$ \mbox{dist}^*(\sigma_{\epsilon, n, x}, \ \mu) < \rho \  \ \ \ \forall \ 0 <\epsilon \leq \epsilon_0 \Big \}.\end{equation}

We call the set $\widehat A_{\mu} \subset M$ \em the basin of empiric stochastic stability of $\mu$. \em Note that it is defined for any probability measure $\mu \in {\mathcal M}$, but it may be empty, or even if nonempty, it may have zero Lebesgue-measure when $\mu$ is not empirically stochastically stable.

The set $\widehat A_{\mu}$ is measurable (see Lemma \ref{lemma3}). According to Definition \ref{definitionEmpiricStochasticStabilityMeasure}, a probability measure  $\mu$ is empirically stochastically stable  if and only if the set $\widehat A_{\mu}$ has positive Lebesgue measure (see Lemma \ref{lemma4}).
\end{definition}
\begin{definition}
\label{definitionGloballyEmpStochStabilityMeasure} {\bf (Global empiric stochastic stability of a measure)}

We say that $\mu \in {\mathcal M}_f$ is \em globally empirically stochastically stable \em if  it is empirically stochastically stable, and besides its basin  $\widehat A_{\mu}$   of empiric stability has full Lebesgue measure.

\end{definition}
\begin{definition}
\label{definitionBasinWidehatA_K} {\bf (Basin of empiric stochastic stability of a set of measures)}

For any nonempty weak$^*$-compact set $\mathcal K \subset {\mathcal M}$, we construct the following (maybe empty) set in the space manifold $M$:
\begin{equation} \label{equation11} \widehat A_{\mathcal K} := \{x \in M \colon \ \ \forall \rho >0 \ \exists \ N= N (\rho) \mbox{ such that } \forall \ n \geq N \ \exists \ \epsilon_0 = \epsilon_0(\rho, n) >0 \mbox { satisfying } $$ $$ \mbox{dist}^*(\sigma_{\epsilon, n, x}, \ \mathcal K) < \rho     \ \ \ \forall \ 0 <\epsilon \leq \epsilon_0  \}.\end{equation}

We call $\widehat A_{\mathcal K} \subset M$ \em the basin of empiric stochastic stability of $\mathcal K$. \em

\end{definition}
Note that $\widehat A_{\mathcal K} $ is defined for any nonempty weak$^*$-compact set ${\mathcal K} \subset {\mathcal M}$. But it may be empty, or even if nonempty,  it may have zero Lebesgue measure when ${\mathcal K}$ is not empirically stochastically stable, according to the following definition:

\begin{definition} \label{definitionEmpiricStochasticStabilitySetK}
{\bf (Empiric stochastic stability of  a set of measures)}

We call a nonempty weak$^*$-compact set ${\mathcal K} \subset {\mathcal M}_f$ of $f$-invariant probability measures     \em empirically stochastically stable \em if :

\begin{itemize}
\item[a) ] There exists a measurable set $\widehat A  \subset M$ with positive Lebesgue measure, such that:

For all $\rho > 0$ and for all $n \in  \mathbb{N}^+$ large enough, there exists $\epsilon_0>0$ (which  may depend on $\rho$ and  $n$, but not on $x$), satisfying:
$$\mbox{dist}^*(\sigma_{\epsilon, n, x}, \ {\mathcal K}) < \rho \  \ \ \ \forall \ 0 <\epsilon \leq \epsilon_0, \ \ \ \forall \ x \in \widehat A.$$
\item[b) ]  ${\mathcal K}$ is   minimal in the following sense:  if   ${\mathcal K}' \subset {\mathcal M}_f$ is  nonempty and weak$^*$-compact, and if   ${\widehat A}_{\mathcal K} \subset {\widehat A}_{{\mathcal K}'}$ Lebesgue-a.e., then ${\mathcal K } \subset {\mathcal K}'$.
\end{itemize}

By definition,  if $\mathcal K$ is empirically stochastically stable, then the set $\widehat A \subset M$ satisfying condition a),  has positive Lebesgue measure and is contained in $\widehat A_{\mathcal K}$. Since  $\widehat A_{\mathcal K}$ is measurable (see Lemma \ref{lemma3bis}), we conclude that it has positive Lebesgue measure.

Nevertheless,  for a nonempty weak$^*$-compact set ${\mathcal K}$ be empirically stochastically stable, it is not enough that  $\widehat A_{\mathcal K} $ has positive Lebesgue measure. In fact, to avoid the whole set ${\mathcal M}_f$ of $f$-invariant measures be always an empirically stochastically stable set, we ask ${\mathcal K}$ to satisfy   condition b). In brief, we require  a property of minimality of ${\mathcal K}$ with respect to  Lebesgue-a.e. point  of its basin $\widehat A_{\mathcal K}$ of empiric stochastic stability.
\end{definition}
\begin{definition}
\label{definitionGloballyEmpStochStabilitySet} {\bf (Global empiric stochastic stability of a set of measures)}

We say that a nonempty weak$^*$-compact set  ${\mathcal K} \in {\mathcal M}_f$ is \em globally empirically stochastically stable \em if  it is empirically stochastically stable, and besides its basin  $\widehat A_{\mathcal K}$ of empiric stability  has full Lebesgue measure.

\end{definition}

  We recall the following definitions from \cite{Polaca}:

\begin{definition}
{\bf (Empiric zero-noise probabilities and $p\omega$-limit sets)}
\label{definitionEmpiricProbab&pomega}

For any fixed natural  number $n \geq 1$,   the \em empiric probability $\sigma_{n, x} $ \em of the   orbit with initial state  $x \in M$  and up to time $n$  of the zero-noise system $(M, f)$,  is defined by the following equality:
$$\sigma_{ n, x} := \frac{1}{n} \sum_{j= 1}^n \delta_{f^j(x)}.$$
It is standard to check, from the construction of the empiric stochastic probabilities in Definition \ref{definitionEmpiricStochasticProb}, that   $\sigma_{\epsilon, n, x}$  is absolutely continuous with respect to the Lebesgue measure $m$. In contrast,  the empiric probability $\sigma_{n,x}$ for the zero-noise orbits is \em atomic\em, since  it  is supported on a finite number of points.

\vspace{.2cm}

The \em p-omega limit set $p\omega_x$ \em in the space ${\mathcal M}$ of probability measures, corresponding to the orbit of $x \in M$, is defined by:
$$p\omega_x := \{\mu \in {\mathcal M}: \ \exists \ n_i \rightarrow + \infty \mbox{ such that } {\lim}^*_{i \rightarrow + \infty} \sigma_{n_i, x} = \mu\}, $$
where ${\lim}^*$ is taken in the weak$^*$-topology of ${\mathcal M}$.
 It is standard to check that $p \omega_x \subset {\mathcal M}_f$ for all $x \in M$.
\end{definition}
\begin{definition}
\label{definitionBasinStatisticalAttraction} {\bf (Strong and $\rho$-weak basin of statistical attraction)}

For any $f$-invariant probability measure $\mu \in {\mathcal M}_f$,   the (strong) \em basin   of statistical attraction of $\mu$  \em  is the (maybe empty) set
\begin{equation}
\label{equation02}
A_{\mu} \colon = \big \{x \in M \colon \ \ p\omega_x = \{\mu\} \big \}.\end{equation}
 For any $f$-invariant probability measure $\mu \in {\mathcal M}_f$,   and for any $\rho >0$, the   \em $\rho$-weak basin of statistical attraction of $\mu$  \em  is the (maybe empty) set
$$A^{\rho}_{\mu} \colon = \big \{x \in M \colon \ \ \mbox{dist}^*(p\omega_x , \{\mu\}) < \rho \big \}.$$
\end{definition}

\begin{definition}
\label{definitionPhysicalMeasures} {\bf (Physical and pseudo-physical measures)}

For the zero-noise dynamical system $(M, f)$, an $f$-invariant probability measure $\mu $ is   \em physical \em if
its strong basin of statistical attraction $A_{\mu}$ has positive Lebesgue measure.

An $f$-invariant probability measure $\mu$ is \em pseudo-physical \em if for all $\rho >0$, its $\rho$-weak basin of statistical attraction $A^{\rho}_{\mu}$ has positive Lebesgue measure.
\end{definition}
It is standard to check that, even if the $\rho$-weak basin of statistical attraction $A^{\rho}_{\mu}$ depends on the chosen weak$^*$ metric in the space ${\mathcal M}$ of probabilities, the set of pseudo-physical measures remains the same when changing this metric (provided that the new metric also endows the weak$^*$-topology).

Note that the strong basin of statistical attraction of any measure is always contained in the $\rho$-weak basin of the same measure. Hence, any physical measure (if there exists some) is pseudo-physical. But not all the pseudo-physical measures are necessarily physical (see for instance example 5 of \cite{Cat-StatisticalAttractors}).

We remark that we \em do not require  the ergodicity  of $\mu$   \em to be physical or pseudo-physical. In fact, in  \cite{Golenischeva}  it is proved that
the $C^{\infty}$ diffeomorphism, popularly known as the Bowen Eye, exhibits a segment of pseudo-physical measures whose extremes, and so all the measures in the segement, are non ergodic. Also, for some $C^0$-version of Bowen Eye (see example 5 B of \cite {Cat-StatisticalAttractors}) there is a unique pseudo-physical measure, it is physical and non-ergodic.

\newpage

\subsection{Statement of the results} \label{subsectionStatementOfResults}

\begin{theorem}
{\bf (Characterization of empirically stochastically stable measures)}
\label{theoremEmpiricStochasticStableMeasure=Physical}

Let $f: M \mapsto M$ be a continuous map  on a compact Riemannian manifold $M$. Let $\mu$ be an $f$-invariant probability measure. Then, $\mu$ is empirically stochastically stable if and only if it is physical.

Besides, if $\mu$ is physical, then its   basin $\widehat A_{\mu} \subset M  $ of empiric stochastic stability equals Lebesgue-a.e. its strong basin $A_{\mu} \subset M$ of  statistical attraction.
\end{theorem}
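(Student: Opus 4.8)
The plan is to reduce the entire statement to a single set-equality: that the basin $\widehat A_{\mu}$ of empiric stochastic stability coincides, up to a Lebesgue-null set (in fact pointwise), with the strong basin $A_{\mu}$ of statistical attraction. This reduction settles everything at once. Indeed, by Lemma \ref{lemma4} the measure $\mu$ is empirically stochastically stable if and only if $\widehat A_{\mu}$ has positive Lebesgue measure, while by Definition \ref{definitionPhysicalMeasures} it is physical if and only if $A_{\mu}$ has positive Lebesgue measure (here one uses that $\mu$ is $f$-invariant, as assumed). So once I prove $\widehat A_{\mu} = A_{\mu}$, both the claimed equivalence and the final assertion comparing the two basins follow immediately, and a fortiori the a.e. equality stated in the theorem holds.

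The technical core is a continuity statement in the noise level: for each fixed $n \geq 1$,
$$\lim_{\epsilon \to 0^+} \mbox{dist}^*(\sigma_{\epsilon, n, x}, \sigma_{n,x}) = 0 \qquad \mbox{for every } x \in M,$$
uniformly in $x$ (though only pointwise convergence is needed below). To prove it I would compare the two empiric measures through their integrals against a fixed observable: by (\ref{equationInt_sigma_epsilon,n,x}) one has $\int \varphi \, d\sigma_{\epsilon,n,x} = \frac{1}{n}\sum_{j=1}^n (\mathcal{L}_\epsilon^j \varphi)(x)$, whereas $\int \varphi\, d\sigma_{n,x} = \frac{1}{n}\sum_{j=1}^n \varphi(f^j(x))$, so it suffices to show $\|\mathcal{L}_\epsilon^j \varphi - \varphi\circ f^j\|_\infty \to 0$ as $\epsilon \to 0^+$ for each fixed $j$ and each $\varphi \in C^0(M,\mathbb{C})$. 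The case $j=1$ is immediate from (\ref{equationL}) and (\ref{equationp_epsilon}): $(\mathcal{L}_\epsilon\varphi)(x)$ is the average of $\varphi$ over the ball $B_{\epsilon}(f(x))$, so $|(\mathcal{L}_\epsilon\varphi)(x) - \varphi(f(x))|$ is bounded by the modulus of continuity of $\varphi$ at scale $\epsilon$, uniformly in $x$. The general case follows by induction, writing $\mathcal{L}_\epsilon^j\varphi - \varphi\circ f^j = \mathcal{L}_\epsilon(\mathcal{L}_\epsilon^{j-1}\varphi - \varphi\circ f^{j-1}) + \big(\mathcal{L}_\epsilon(\varphi\circ f^{j-1}) - (\varphi\circ f^{j-1})\circ f\big)$ and using that $\mathcal{L}_\epsilon$ is a weak contraction for the sup-norm (it is an averaging operator) to handle the first summand, and the case $j=1$ applied to the continuous function $\varphi\circ f^{j-1}$ to handle the second. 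Passing from uniform convergence of the integrals against a countable weak$^*$-dense family of observables to convergence in the metric $\mbox{dist}^*$ is then routine.

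With this continuity in hand both inclusions are short. For $A_{\mu} \subset \widehat A_{\mu}$: if $x \in A_{\mu}$ then $\sigma_{n,x} \to \mu$, so given $\rho > 0$ I pick $N$ with $\mbox{dist}^*(\sigma_{n,x},\mu) < \rho/2$ for all $n \geq N$, and then for each such fixed $n$ use the continuity in $\epsilon$ to pick $\epsilon_0(\rho,n)$ with $\mbox{dist}^*(\sigma_{\epsilon,n,x},\sigma_{n,x}) < \rho/2$ for $0 < \epsilon \leq \epsilon_0$; the triangle inequality then places $x$ in $\widehat A_{\mu}$. For the reverse inclusion $\widehat A_{\mu} \subset A_{\mu}$: if $x \in \widehat A_{\mu}$, then by (\ref{equation01}) for each $\rho > 0$ there is $N(\rho)$ such that for every $n \geq N(\rho)$ one has $\mbox{dist}^*(\sigma_{\epsilon,n,x},\mu) < \rho$ for all sufficiently small $\epsilon$; letting $\epsilon \to 0^+$ with $n$ fixed and invoking the continuity above together with the continuity of $\mbox{dist}^*$, I obtain $\mbox{dist}^*(\sigma_{n,x},\mu) \leq \rho$ for all $n \geq N(\rho)$. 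Since $\rho > 0$ is arbitrary, this forces $\sigma_{n,x} \to \mu$, i.e. $p\omega_x = \{\mu\}$ and $x \in A_{\mu}$. Hence $\widehat A_{\mu} = A_{\mu}$, completing the argument.

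I expect the induction yielding $\|\mathcal{L}_\epsilon^j\varphi - \varphi\circ f^j\|_\infty \to 0$ to be the only place demanding genuine care; everything else is a manipulation of the quantifiers in the definitions of the two basins together with the triangle inequality. The one subtle point to keep in view is that the limit $\epsilon \to 0^+$ must be taken with $n$ held fixed: it is essential that $n$ does not grow as $\epsilon$ shrinks, which is precisely what the finite-time formulation in Definitions \ref{definitionEmpiricStochasticStabilityMeasure} and \ref{definitionBasinWidehatA_mu} permits, and which is what makes the zero-noise empiric measure $\sigma_{n,x}$ the correct limiting object.
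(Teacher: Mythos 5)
Your treatment of the two ingredients you do prove is correct and matches the paper's own route: the uniform convergence $\|\mathcal{L}_\epsilon^j\varphi-\varphi\circ f^j\|_\infty\to 0$ is exactly the dual formulation of parts (e)--(f) of Lemma \ref{lemma1} (the paper runs the same induction on the measures ${{\mathcal L}_\epsilon^*}^j\delta_x$ rather than on the observables), and your two inclusions giving $\widehat A_\mu=A_\mu$ reproduce Lemma \ref{lemma3}. The gap is in your opening reduction, where you assert that this set equality ``settles everything at once'' because, ``by Lemma \ref{lemma4}'', empiric stochastic stability is equivalent to $m(\widehat A_\mu)>0$. Lemma \ref{lemma4} is not a prior result available for quotation: it is one of the lemmas into which the paper decomposes the proof of this very theorem, and its nontrivial direction is precisely the piece your proposal does not contain. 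The issue is a quantifier mismatch: Definition \ref{definitionEmpiricStochasticStabilityMeasure} demands a positive-measure set $\widehat A$ on which the threshold $N(\rho)$ and the noise bound $\epsilon_0(\rho,n)$ can be chosen \emph{independently of} $x$, whereas membership in $\widehat A_\mu$ as defined in (\ref{equation01}) only asks for such $N$ and $\epsilon_0$ separately for each individual $x$. Consequently $m(\widehat A_\mu)>0$ does not immediately yield empiric stochastic stability, and the implication ``physical $\Rightarrow$ empirically stochastically stable'' is left unproved. (The converse implication is unaffected, since $\widehat A\subset\widehat A_\mu$ trivially.)

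To close the gap one needs an Egorov-type exhaustion, which is the actual content of Lemma \ref{lemma4}: writing $\widehat A_\mu=\bigcap_k\bigcup_N E_{N,1/k}$ with $E_{N,1/k}=\bigcap_{n\geq N}C_{n,1/k}(\mu)$ increasing in $N$, choose $N(k)$ so that $m(\widehat A_\mu\setminus E_{N(k),1/k})<\alpha/3^k$ where $\alpha=m(\widehat A_\mu)$, and set $\widehat A=\widehat A_\mu\cap\bigcap_k E_{N(k),1/k}$; this set has measure at least $\alpha/2$ and on it $N$ can be chosen uniformly in $x$. One then invokes the uniformity \emph{in $x$} of the convergence $\sigma_{\epsilon,n,x}\to\sigma_{n,x}$ to obtain $\epsilon_0(\rho,n)$ independent of $x$. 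This is exactly where the uniform-in-$x$ statement matters --- the one you prove and then set aside with the remark that ``only pointwise convergence is needed below''. It is needed; without it, and without the exhaustion argument, the reduction in your first paragraph does not go through.
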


 We will prove Theorem \ref{theoremEmpiricStochasticStableMeasure=Physical} and the following corollaries in Section \ref{sectionProofTheorem1}.

\begin{corollary}
\label{corollaryGlobalStabilityUniquePhysicalMeasure}
Let $f: M \mapsto M$ be a continuous map  on a compact Riemannian manifold $M$. Then, the following conditions are equivalent:

\begin{itemize}
\item[(i)  ]   \ \ \ There exists an $f$-invariant probability measure $\mu_1$ that is globally empirically stochastically stable.

\item[(ii)  ]     \ \ \ There exists an $f$-invariant probability measure $\mu_2$ that is physical and such that its strong basin of statistical attraction has full Lebesgue measure.

\item[(iii)  ]   \ \ \  There exists a unique $f$-invariant probability measure $\mu_3$ that is pseudo-physical.
\end{itemize}
Besides, if (i), (ii) or (iii) holds, then $\mu_1= \mu_2 = \mu_3$, this measure is the unique empirically stochastically stable, and the set $\{\mu_1\}$ is the unique weak$^*$-compact set in the space of probability measures that is empirically stochastically stable.
\end{corollary}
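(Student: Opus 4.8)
The plan is to derive the entire statement from Theorem~\ref{theoremEmpiricStochasticStableMeasure=Physical}, establishing the cycle $(i)\Leftrightarrow(ii)$, then $(ii)\Rightarrow(iii)$ and $(iii)\Rightarrow(ii)$, and finally the \emph{besides} assertions. The equivalence $(i)\Leftrightarrow(ii)$ is immediate. If $\mu_1$ is globally empirically stochastically stable it is in particular empirically stochastically stable, hence physical by the theorem, and its basin $\widehat A_{\mu_1}$ has full Lebesgue measure; since the theorem also gives $\widehat A_{\mu_1}=A_{\mu_1}$ Lebesgue-a.e., the strong basin $A_{\mu_1}$ has full measure, so $\mu_2:=\mu_1$ witnesses $(ii)$. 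Conversely a physical $\mu_2$ with $A_{\mu_2}$ of full measure is, by the theorem, empirically stochastically stable with $\widehat A_{\mu_2}=A_{\mu_2}$ a.e.\ of full measure, hence globally so, and $\mu_1:=\mu_2$ witnesses $(i)$. In particular $\mu_1=\mu_2$.

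For $(ii)\Rightarrow(iii)$, a physical measure is pseudo-physical, so at least one pseudo-physical measure exists. For uniqueness, let $\nu$ be any pseudo-physical measure and fix $\rho>0$: the $\rho$-weak basin $A^{\rho}_{\nu}$ has positive Lebesgue measure while $A_{\mu_2}$ has full measure, so I may pick $x\in A^{\rho}_{\nu}\cap A_{\mu_2}$, for which $p\omega_x=\{\mu_2\}$ and $\mathrm{dist}^*(p\omega_x,\{\nu\})<\rho$; thus $\mathrm{dist}^*(\mu_2,\nu)<\rho$ for every $\rho>0$, whence $\nu=\mu_2$ and $\mu_3:=\mu_2$ witnesses $(iii)$. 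The converse $(iii)\Rightarrow(ii)$ is the only step needing more than Theorem~\ref{theoremEmpiricStochasticStableMeasure=Physical}: here I would invoke the basic property of pseudo-physical (SRB-like) measures from \cite{Polaca}, namely that for Lebesgue-a.e.\ $x$ the nonempty compact set $p\omega_x$ is contained in the set of pseudo-physical measures. Under $(iii)$ that set equals $\{\mu_3\}$, forcing $p\omega_x=\{\mu_3\}$ for Lebesgue-a.e.\ $x$; hence $A_{\mu_3}$ has full Lebesgue measure and $\mu_3$ is physical, giving $(ii)$. These implications close the equivalences, and the identifications above show $\mu_1=\mu_2=\mu_3$.

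For the \emph{besides} claims I assume one (hence all) of (i)--(iii) and write $\mu:=\mu_1=\mu_2=\mu_3$. Any empirically stochastically stable measure is physical by Theorem~\ref{theoremEmpiricStochasticStableMeasure=Physical}, hence pseudo-physical, hence equal to $\mu$ by $(iii)$; as $\mu$ is itself empirically stochastically stable, it is the unique such measure. To show $\{\mu\}$ is an empirically stochastically stable \emph{set}, note that clause~a) of Definition~\ref{definitionEmpiricStochasticStabilitySetK} for $\mathcal K=\{\mu\}$ is exactly empiric stochastic stability of the measure $\mu$, since $\mathrm{dist}^*(\sigma_{\epsilon,n,x},\{\mu\})=\mathrm{dist}^*(\sigma_{\epsilon,n,x},\mu)$ and $\widehat A_{\{\mu\}}=\widehat A_{\mu}$; for the minimality clause~b), if $\mathcal K'$ is nonempty weak$^*$-compact with $\widehat A_{\{\mu\}}\subset\widehat A_{\mathcal K'}$ a.e., then on the full-measure set $\widehat A_{\mu}\cap\widehat A_{\mathcal K'}$ the measures $\sigma_{\epsilon,n,x}$ are simultaneously $\rho$-close to $\mu$ and to $\mathcal K'$, so $\mathrm{dist}^*(\mu,\mathcal K')<2\rho$ for all $\rho>0$, forcing $\mu\in\mathcal K'$. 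Uniqueness of the stable set then follows from minimality applied in the other direction: for any empirically stochastically stable set $\mathcal K$, the full measure of $\widehat A_{\mu}=\widehat A_{\{\mu\}}$ gives $\widehat A_{\mathcal K}\subset\widehat A_{\{\mu\}}$ a.e.\ trivially, so clause~b) for $\mathcal K$ with $\mathcal K'=\{\mu\}$ yields $\mathcal K\subset\{\mu\}$, i.e.\ $\mathcal K=\{\mu\}$.

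I expect the main obstacle to be $(iii)\Rightarrow(ii)$, the only implication not read off directly from Theorem~\ref{theoremEmpiricStochasticStableMeasure=Physical}, since it rests on the a.e.\ containment of $p\omega_x$ in the set of pseudo-physical measures; everything else is bookkeeping, the one delicate point being the correct use of the minimality clause~b) together with the identity $\widehat A_{\{\mu\}}=\widehat A_{\mu}$ in the set-uniqueness argument.
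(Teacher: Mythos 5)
Your proof is correct, and its skeleton --- deriving everything from Theorem \ref{theoremEmpiricStochasticStableMeasure=Physical} together with the Lebesgue-a.e.\ inclusion $p\omega_x\subset{\mathcal O}_f$ from \cite{Polaca} --- is the same as the paper's, but two steps are handled differently. For (ii)$\Rightarrow$(iii) the paper invokes the \emph{minimality} assertion of Theorem \ref{TheoremCE} (the set of pseudo-physical measures is the minimal nonempty weak$^*$-compact set containing $p\omega_x$ a.e., hence equals $\{\mu_2\}$), whereas you prove uniqueness by hand: any pseudo-physical $\nu$ has $m(A^{\rho}_{\nu})>0$, so $A^{\rho}_{\nu}$ meets the full-measure set $A_{\mu_2}$, giving $\mbox{dist}^*(\mu_2,\nu)<\rho$ for every $\rho>0$. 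This is more elementary, using only the definition of pseudo-physical measures rather than the structure theorem; both routes are valid. The second difference is that you actually prove the closing \lq\lq besides\rq\rq\ assertions --- uniqueness of the empirically stochastically stable measure, and that $\{\mu_1\}$ is the unique empirically stochastically stable \emph{set} --- via the identity $\widehat A_{\{\mu\}}=\widehat A_{\mu}$ and a two-sided use of the minimality clause b) of Definition \ref{definitionEmpiricStochasticStabilitySetK}; the triangle-inequality argument forcing $\mu\in{\mathcal K}'$ from the full-measure intersection of the two basins is sound, as is deducing ${\mathcal K}=\{\mu\}$ for any stable ${\mathcal K}$ from the trivial a.e.\ inclusion $\widehat A_{\mathcal K}\subset\widehat A_{\{\mu\}}$. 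The paper's proof of this corollary stops after establishing $\mu_1=\mu_2=\mu_3$ and leaves those final claims implicit (they can alternatively be extracted from Theorem \ref{theoremEmpricStochasticStableSetsSubsetPseudoPhysical} and its corollaries, proved later), so your treatment is in fact more self-contained on that point.
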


Before stating the next corollary, we fix the following definition: we say that a property of  the maps  on $M$  is \em $C^1$-generic \em if it holds for  a countable intersection of open and dense sets of maps in the $C^1$- topology. 

\begin{corollary}
\label{corollaryC1&C2ExpandingMapsInTheCircle}

For $C^1$-generic and for all $C^2$ expanding maps of the circle, there exists a unique ergodic  measure $\mu$ that is empirically stochastically stable. Besides $\mu$ is globally empirically stochastically stable and it is the unique measure that satisfies the following  Pesin Entropy Formula \em \cite{EntropyFormula-Ledrappier-Young}, \cite{EntropyFormula-Liu-Qian}\em:
\begin{equation}
\label{eqnPesinEntropyFormula}
h_{\mu}(f) = \int \log |f'| \, d \mu.\end{equation}

\end{corollary}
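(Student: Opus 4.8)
The plan is to deduce the statement from Corollary \ref{corollaryGlobalStabilityUniquePhysicalMeasure} by checking, for the maps in question, that there is exactly one pseudo-physical measure, and then to identify that measure through the entropy formula. Concretely, I would verify that these maps satisfy condition (iii) of Corollary \ref{corollaryGlobalStabilityUniquePhysicalMeasure}; once this is done the full equivalence there applies and delivers the asserted unique globally empirically stochastically stable measure, while Theorem \ref{theoremEmpiricStochasticStableMeasure=Physical} identifies empiric stochastic stability with physicality.

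First I would establish the existence of a unique pseudo-physical measure. In the $C^2$ case this is classical: bounded distortion (the folklore theorem) yields a unique absolutely continuous invariant probability measure $\mu$, which is ergodic, and Birkhoff's theorem together with ergodicity and absolute continuity shows that its strong basin of statistical attraction $A_\mu$ (Definition \ref{definitionBasinStatisticalAttraction}) has full Lebesgue measure. Thus $\mu$ is physical with full-measure basin, which is exactly condition (ii) of Corollary \ref{corollaryGlobalStabilityUniquePhysicalMeasure}, and by the equivalence (ii)$\Leftrightarrow$(iii) there, $\mu$ is the unique pseudo-physical measure. In the $C^1$-generic case neither existence nor uniqueness of the physical measure is automatic, so I would import the genericity theorem for $C^1$ expanding circle maps guaranteeing a unique pseudo-physical measure $\mu$, again ergodic and with full-measure basin. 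In either case condition (iii) of Corollary \ref{corollaryGlobalStabilityUniquePhysicalMeasure} holds.

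Granting condition (iii), Corollary \ref{corollaryGlobalStabilityUniquePhysicalMeasure} immediately gives that $\mu$ is globally empirically stochastically stable, that it is the unique empirically stochastically stable measure, and that $\{\mu\}$ is the unique empirically stochastically stable weak$^*$-compact set; since $\mu$ is ergodic this in particular provides the unique ergodic empirically stochastically stable measure. It then remains to treat the entropy formula. Being pseudo-physical, $\mu$ satisfies Pesin's formula (\ref{eqnPesinEntropyFormula}) by the result already established for the $C^1$-expanding case. For the uniqueness of the measure realizing (\ref{eqnPesinEntropyFormula}), I would combine Ruelle's inequality with the characterization of equality in the entropy formula from \cite{EntropyFormula-Ledrappier-Young} and \cite{EntropyFormula-Liu-Qian} (see also \cite{11-Liu-Qian}): for expanding maps equality forces absolute continuity, and there is a single absolutely continuous invariant measure, namely $\mu$, so $\mu$ is the unique invariant measure satisfying (\ref{eqnPesinEntropyFormula}).

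The main obstacle is the $C^1$-generic step. Unlike the $C^2$ case, one cannot invoke bounded distortion to produce a unique absolutely continuous (hence physical) measure, so the uniqueness \emph{and} ergodicity of the pseudo-physical measure must come from a genuinely $C^1$-generic argument, and the characterization ``Pesin's formula $\Leftrightarrow$ absolute continuity'' is considerably more delicate in class $C^1$ than in $C^{1+\alpha}$. Making these two imported ingredients precise, and verifying that the measure they single out is exactly the one governed by Corollary \ref{corollaryGlobalStabilityUniquePhysicalMeasure}, is where the real work lies; everything else is a direct application of the earlier results of the paper.
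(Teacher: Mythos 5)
Your reduction to condition (iii) of Corollary \ref{corollaryGlobalStabilityUniquePhysicalMeasure} is sound, and your treatment of the $C^2$ case (folklore theorem, unique ergodic a.c.i.m., Birkhoff, hence condition (ii) and so (iii)) is a correct, if slightly different, route to the same place the paper reaches. The genuine gap is in the $C^1$-generic half, and it is not merely a matter of ``making imported ingredients precise.'' You propose to establish uniqueness of the measure satisfying (\ref{eqnPesinEntropyFormula}) by arguing that equality in Pesin's formula forces absolute continuity and then invoking uniqueness of the a.c.i.m. This cannot work in class $C^1$: by \'{A}vila--Bochi \cite{Avila-Bochi}, a $C^1$-generic map has \emph{no} absolutely continuous invariant probability at all, so the implication ``equality $\Rightarrow$ absolute continuity'' would leave you with no candidate measure, whereas the statement asserts one exists. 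The input the paper actually uses is Qiu's theorem \cite{Qiu}: for $C^1$-generic transitive hyperbolic (in particular expanding) maps there is a unique invariant measure satisfying Pesin's Entropy Formula, and generically it is mutually singular with Lebesgue. You leave this as an unnamed ``genericity theorem guaranteeing a unique pseudo-physical measure,'' which is precisely the nontrivial content you would need to supply.

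The second idea you are missing is the direction of the bridge between Pesin's formula and pseudo-physics. The paper does not go from the entropy formula to absolute continuity; it uses the result of \cite{Portugalia} that for \emph{every} $C^1$ expanding map of the circle, every pseudo-physical measure satisfies (\ref{eqnPesinEntropyFormula}). Combined with Theorem \ref{TheoremCE} (the set of pseudo-physical measures is nonempty) and the uniqueness of the measure satisfying the formula (Ruelle--Pesin in the $C^2$ case, Qiu in the $C^1$-generic case), this pins down a unique pseudo-physical measure with no absolute-continuity argument whatsoever; finiteness of the pseudo-physical set then makes it physical (as in Corollary \ref{corollary3} / \cite{Polaca}), and Theorem \ref{theoremEmpiricStochasticStableMeasure=Physical} and Corollary \ref{corollaryGlobalStabilityUniquePhysicalMeasure} finish exactly as you describe. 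So your overall scaffolding is right, but the two external inputs you would need to cite are Qiu's theorem and the Portugalia inclusion ``pseudo-physical $\Rightarrow$ Pesin's formula,'' not a $C^1$ version of ``Pesin's formula $\Rightarrow$ absolute continuity,'' which is false generically.
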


Theorem \ref{theoremEmpiricStochasticStableMeasure=Physical} is a particular case of the following  result:

\begin{theorem}
{\bf (Empirically stochastically stable sets and pseudo-physics)}
\label{theoremEmpricStochasticStableSetsSubsetPseudoPhysical}

Let $f: M \mapsto M$ be a continuous map  on a compact Riemannian manifold $M$.

\noindent{(a) } If ${\mathcal K}$ is a nonempty weak$^*$-compact set of $f$-invariant measures that is empirically stochastically stable, then any $\mu \in {\mathcal K}$ is pseudo-physical.

\noindent{(b) } A set ${\mathcal K}$  of $f$-invariant measures is globally empirically stochastically stable  if and only if it coincides with the set of all the pseudo-physical measures.

\end{theorem}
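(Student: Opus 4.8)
The plan is to funnel both parts through one characterization of the basin in terms of the zero-noise $p\omega$-limit sets, and then to quote the known structure of the set of pseudo-physical measures. Write $\mathcal P\subset{\mathcal M}_f$ for the set of all pseudo-physical measures; from \cite{Polaca} I will use that (PP1) $\mathcal P$ is nonempty and weak$^*$-compact, (PP2) $p\omega_x\subset\mathcal P$ for Lebesgue-a.e.\ $x\in M$, and (PP3) $\mathcal P$ is the smallest weak$^*$-compact set with property (PP2), i.e.\ $p\omega_x\subset\mathcal C$ a.e.\ for a weak$^*$-compact $\mathcal C$ forces $\mathcal P\subset\mathcal C$. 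The preliminary analytic step I need is that, for each fixed $n$, $\sigma_{\epsilon,n,x}\to\sigma_{n,x}$ in the weak$^*$ topology as $\epsilon\to0^+$, uniformly in $x$. By (\ref{equationInt_sigma_epsilon,n,x}) this reduces to ${\mathcal L}_\epsilon^{\,j}\varphi\to\varphi\circ f^{\,j}$ uniformly for each fixed $j$ and each $\varphi\in C^0(M,\mathbb{C})$. The case $j=1$ is the estimate $\|{\mathcal L}_\epsilon\varphi-\varphi\circ f\|_\infty\le\omega_\varphi(\epsilon)$, immediate from (\ref{equationp_epsilon}), with $\omega_\varphi$ the modulus of continuity; the inductive step uses the splitting
\[
{\mathcal L}_\epsilon^{\,j}\varphi-\varphi\circ f^{\,j}={\mathcal L}_\epsilon\big({\mathcal L}_\epsilon^{\,j-1}\varphi-\varphi\circ f^{\,j-1}\big)+\big({\mathcal L}_\epsilon(\varphi\circ f^{\,j-1})-(\varphi\circ f^{\,j-1})\circ f\big),
\]
together with $\|{\mathcal L}_\epsilon\|\le1$ and uniform continuity of $\varphi\circ f^{\,j-1}$. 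From this convergence I obtain, for every nonempty weak$^*$-compact $\mathcal K$ and \emph{every} $x$, the characterization $\widehat A_{\mathcal K}=\{x\in M: p\omega_x\subset\mathcal K\}$: if $p\omega_x\subset\mathcal K$ then $\mbox{dist}^*(\sigma_{n,x},\mathcal K)\to0$ and the uniform convergence supplies the required $\epsilon_0$, while if $x\in\widehat A_{\mathcal K}$ then letting $\epsilon\to0^+$ in the defining inequality gives $\mbox{dist}^*(\sigma_{n,x},\mathcal K)\le\rho$ for all large $n$, hence $p\omega_x\subset\mathcal K$.

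For part (a), assume $\mathcal K$ is empirically stochastically stable; as noted after Definition \ref{definitionEmpiricStochasticStabilitySetK} its basin $\widehat A_{\mathcal K}$ then has positive Lebesgue measure. I would delete from $\widehat A_{\mathcal K}$ the null set on which (PP2) fails, obtaining a positive-measure set on which $p\omega_x\subset\mathcal K\cap\mathcal P$, and set $\mathcal K':=\overline{\bigcup p\omega_x}$ over this set, a nonempty weak$^*$-compact subset of $\mathcal K\cap\mathcal P\subset{\mathcal M}_f$. By construction $p\omega_x\subset\mathcal K'$ on that set, so the characterization gives $\widehat A_{\mathcal K}\subset\widehat A_{\mathcal K'}$ Lebesgue-a.e.; the minimality clause b) of Definition \ref{definitionEmpiricStochasticStabilitySetK} then forces $\mathcal K\subset\mathcal K'\subset\mathcal P$, so every $\mu\in\mathcal K$ is pseudo-physical.

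For part (b), if $\mathcal K$ is globally empirically stochastically stable then $\widehat A_{\mathcal K}$ has full measure, so the characterization gives $p\omega_x\subset\mathcal K$ a.e.\ and (PP3) yields $\mathcal P\subset\mathcal K$; testing clause b) against the competitor $\mathcal P$ (legitimate because $\widehat A_{\mathcal P}$ is also full by (PP2), whence $\widehat A_{\mathcal K}\subset\widehat A_{\mathcal P}$ a.e.) returns $\mathcal K\subset\mathcal P$, so $\mathcal K=\mathcal P$. Conversely I must verify that $\mathcal P$ is itself globally empirically stochastically stable: its basin is full by (PP2) and the characterization, clause b) is precisely (PP3) read through the characterization, and the only delicate point is the uniform clause a). For the latter I would apply Egorov's theorem to the a.e.-convergent functions $x\mapsto\mbox{dist}^*(\sigma_{n,x},\mathcal P)\to0$ to get a positive-measure set $\widehat A$ on which this convergence is uniform, and then combine it with the uniform-in-$x$ convergence $\sigma_{\epsilon,n,x}\to\sigma_{n,x}$ to produce a single $\epsilon_0=\epsilon_0(\rho,n)$ serving all $x\in\widehat A$.

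I expect the characterization to be the main obstacle, and within it the interchange of the limits $\epsilon\to0^+$ and $n\to+\infty$. The basin is defined by the alternating block $\forall\rho\,\exists N\,\forall n\ge N\,\exists\epsilon_0\,\forall\epsilon\le\epsilon_0$, and collapsing it to the single requirement $\mbox{dist}^*(\sigma_{n,x},\mathcal K)\to0$ depends entirely on the convergence $\sigma_{\epsilon,n,x}\to\sigma_{n,x}$ being uniform in $x$ for each fixed $n$; the inductive proof that ${\mathcal L}_\epsilon^{\,j}\varphi\to\varphi\circ f^{\,j}$ uniformly is the technical heart. Once the characterization and the cited properties (PP1)--(PP3) are in hand, parts (a) and (b) are essentially bookkeeping with the minimality clause and one application of Egorov's theorem.
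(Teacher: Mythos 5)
Your proposal is correct and follows essentially the same route as the paper: you establish the identification $\widehat A_{\mathcal K}=A_{\mathcal K}=\{x\in M:\ p\omega_x\subset{\mathcal K}\}$ via the uniform (in $x$) finite-time limit $\sigma_{\epsilon,n,x}\to\sigma_{n,x}$ (the paper's Lemmas \ref{lemma1} and \ref{lemma3bis}), quote the structure theorem for pseudo-physical measures from \cite{Polaca} (Theorem \ref{TheoremCE}), and then run the minimality clause, exactly as in Lemma \ref{lemma4bis} and the proof of Theorem \ref{theoremEmpricStochasticStableSetsSubsetPseudoPhysical}. Your only departures are cosmetic: in part (a) you build the competitor ${\mathcal K}'=\overline{\bigcup p\omega_x}$ directly instead of arguing by contradiction with ${\mathcal K}\setminus{\mathcal B}_\rho(\nu)$, and you invoke Egorov's theorem for the uniformity in clause a) where the paper reproves that special case by hand in Lemma \ref{lemma4}.
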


We will prove Theorem \ref{theoremEmpricStochasticStableSetsSubsetPseudoPhysical} and the following corollaries in Section \ref{sectionProofTheorem2}.

 \begin{corollary}   \label{corollary2}
 For any continuous map  $f: M \mapsto M$   on a compact Riemannian manifold $M$, there exists and is unique the nonempty weak$^*$-compact set ${\mathcal K}$   of $f$-invariant measures that is globally stochastically stable. Besides, $\mu \in {\mathcal K}$ if and only if $\mu$ is pseudo-physical.
 \end{corollary}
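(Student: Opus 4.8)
The plan is to obtain Corollary \ref{corollary2} as an essentially formal consequence of part (b) of Theorem \ref{theoremEmpricStochasticStableSetsSubsetPseudoPhysical}, which asserts that a set ${\mathcal K}$ of $f$-invariant measures is globally empirically stochastically stable if and only if it coincides with the set ${\mathcal P}$ of \emph{all} pseudo-physical measures. Granting this equivalence, the whole content of the corollary reduces to two points: that ${\mathcal P}$ is a legitimate candidate, i.e. nonempty and weak$^*$-compact (so that "globally empirically stochastically stable set'' is a meaningful attribute of it in the sense of Definitions \ref{definitionEmpiricStochasticStabilitySetK} and \ref{definitionGloballyEmpStochStabilitySet}), and that no other set can share this attribute. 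First I would make these structural facts about ${\mathcal P}$ explicit.

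For weak$^*$-compactness, since ${\mathcal M}$ is itself weak$^*$-compact it suffices to show that ${\mathcal M}\setminus{\mathcal P}$ is open. If $\mu$ is not pseudo-physical, Definition \ref{definitionPhysicalMeasures} gives $\rho>0$ with $m(A^{\rho}_{\mu})=0$; a triangle-inequality argument then shows $A^{\rho/2}_{\mu'}\subset A^{\rho}_{\mu}$ for every $\mu'$ with $\mbox{dist}^*(\mu,\mu')<\rho/2$ (indeed, if some $\nu\in p\omega_x$ lies within $\rho/2$ of $\mu'$ then $\nu$ lies within $\rho$ of $\mu$), so every such $\mu'$ is also non-pseudo-physical, proving the complement open and hence ${\mathcal P}$ compact. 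For nonemptiness I would argue by contradiction: were no measure pseudo-physical, one could attach to each $\mu\in{\mathcal M}_f$ a radius $\rho_\mu>0$ with $m(A^{\rho_\mu}_\mu)=0$, cover the compact set ${\mathcal M}_f$ by finitely many weak$^*$ balls $B(\mu_i,\rho_{\mu_i})\subset{\mathcal M}$, and use that $p\omega_x\subset{\mathcal M}_f$ is nonempty for Lebesgue-a.e.\ $x$; this forces the finitely many null sets $A^{\rho_{\mu_i}}_{\mu_i}$ to cover a full-measure set of initial states, a contradiction. (These two facts are also recorded in \cite{Polaca, Cat-StatisticalAttractors}, and in fact nonemptiness and compactness of ${\mathcal P}$ follow a posteriori from Theorem \ref{theoremEmpricStochasticStableSetsSubsetPseudoPhysical}(b) applied to ${\mathcal K}={\mathcal P}$, since a globally empirically stochastically stable set is by definition nonempty and weak$^*$-compact.)

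With ${\mathcal P}$ nonempty and weak$^*$-compact, the corollary follows at once. Applying Theorem \ref{theoremEmpricStochasticStableSetsSubsetPseudoPhysical}(b) to ${\mathcal K}={\mathcal P}$ shows that ${\mathcal P}$ is globally empirically stochastically stable, which gives existence of the claimed set. For uniqueness, if ${\mathcal K}$ is \emph{any} globally empirically stochastically stable set of $f$-invariant measures, then the same Theorem \ref{theoremEmpricStochasticStableSetsSubsetPseudoPhysical}(b) forces ${\mathcal K}={\mathcal P}$; hence the globally stable set is unique and equals ${\mathcal P}$. The last assertion, that $\mu\in{\mathcal K}$ if and only if $\mu$ is pseudo-physical, is then just the identity ${\mathcal K}={\mathcal P}$ read membership-wise.

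Since the substantive analytic work is entirely absorbed into Theorem \ref{theoremEmpricStochasticStableSetsSubsetPseudoPhysical}, I expect no genuine obstacle at the level of the corollary. The only point that requires care is the nonemptiness of ${\mathcal P}$, where the covering argument must be matched to the precise meaning of $\mbox{dist}^*(p\omega_x,\{\mu\})$ in Definition \ref{definitionBasinStatisticalAttraction}: if that symbol denotes the point-to-set distance $\inf_{\nu\in p\omega_x}\mbox{dist}^*(\nu,\mu)$, the argument above goes through verbatim, whereas if it denotes the Hausdorff distance one must additionally invoke that $p\omega_x$ is compact and connected (a consequence of $\sigma_{n+1,x}-\sigma_{n,x}\to 0$ in the weak$^*$ topology) in order to localize it inside a single small ball before applying the finite cover.
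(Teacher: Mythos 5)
Your proposal is correct and follows essentially the same route as the paper: the paper also deduces the corollary immediately from Theorem \ref{theoremEmpricStochasticStableSetsSubsetPseudoPhysical}(b) (whose ``if'' direction is Lemma \ref{lemma4bis}), with the nonemptiness and weak$^*$-compactness of the set of pseudo-physical measures imported from Theorem \ref{TheoremCE} (cited from \cite{Polaca}) rather than re-proved. Your direct verification of those two structural facts is sound but supplementary; only note that the ``a posteriori'' derivation of them from Theorem \ref{theoremEmpricStochasticStableSetsSubsetPseudoPhysical}(b) is circular, since applying that theorem to ${\mathcal K}={\mathcal P}$ already presupposes ${\mathcal P}$ is a nonempty weak$^*$-compact set.
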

\begin{corollary} \label{corollary3}
 If a pseudo-physical measure $\mu$  is isolated in the set of  pseudo-physical measures, then it is empirically stochastically stable; hence physical.
\end{corollary}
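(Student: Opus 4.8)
The plan is to reduce the statement to physicality and then invoke Theorem~\ref{theoremEmpiricStochasticStableMeasure=Physical}. Denote by $\mathcal{PP}\subset\mathcal{M}_f$ the set of all pseudo-physical measures. By Corollary~\ref{corollary2} (equivalently Theorem~\ref{theoremEmpricStochasticStableSetsSubsetPseudoPhysical}(b)) the set $\mathcal{PP}$ is the unique globally empirically stochastically stable set; in particular it is nonempty and weak$^*$-compact, and its basin $\widehat A_{\mathcal{PP}}$ has full Lebesgue measure. I shall use the companion fact, basic to the theory of pseudo-physical measures, that $p\omega_x\subset\mathcal{PP}$ for Lebesgue-a.e.\ $x\in M$; this is precisely the assertion that $\mathcal{PP}$ is the minimal weak$^*$-compact set carrying the $p\omega$-limits of almost every orbit, and it is the zero-noise translation of $\widehat A_{\mathcal{PP}}$ having full measure. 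Since $\mu$ is isolated in $\mathcal{PP}$, I fix $\rho_0>0$ such that the weak$^*$ ball $B_{\rho_0}(\mu):=\{\nu\in\mathcal{M}\colon \mbox{dist}^*(\nu,\mu)<\rho_0\}$ satisfies $B_{\rho_0}(\mu)\cap\mathcal{PP}=\{\mu\}$.

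First I would produce a positive-measure set of orbits whose $p\omega$-limit contains $\mu$. Because $\mu$ is pseudo-physical, its $\rho_0$-weak basin $A^{\rho_0}_{\mu}$ has positive Lebesgue measure, so a positive-measure set of points $x$ have $p\omega_x$ within weak$^*$-distance $\rho_0$ of $\mu$, and in particular $p\omega_x\cap B_{\rho_0}(\mu)\neq\emptyset$. Intersecting with the full-measure set $\{x\colon p\omega_x\subset\mathcal{PP}\}$ and using $B_{\rho_0}(\mu)\cap\mathcal{PP}=\{\mu\}$, any limit measure of such an orbit lying in $B_{\rho_0}(\mu)$ must equal $\mu$. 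Hence the set $G:=\{x\in M\colon \mu\in p\omega_x \mbox{ and } p\omega_x\subset\mathcal{PP}\}$ has positive Lebesgue measure.

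The decisive step is to upgrade $\mu\in p\omega_x$ to $p\omega_x=\{\mu\}$ for $x\in G$, and here I would use that $p\omega_x$ is always \emph{connected}. Indeed, $\mbox{dist}^*(\sigma_{n,x},\sigma_{n+1,x})\to 0$, because $\sigma_{n+1,x}$ and $\sigma_{n,x}$ differ by a term of order $1/n$; a sequence in a compact metric space whose consecutive terms become arbitrarily close has a connected set of limit points, so $p\omega_x$ is a connected compact subset of $\mathcal{M}_f$. For $x\in G$ we have $p\omega_x\subset\mathcal{PP}$ and $\mu\in p\omega_x$, whence $B_{\rho_0}(\mu)\cap p\omega_x\subset B_{\rho_0}(\mu)\cap\mathcal{PP}=\{\mu\}$; thus $\{\mu\}$ is a nonempty relatively clopen subset of the connected set $p\omega_x$, which forces $p\omega_x=\{\mu\}$. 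Therefore $G\subset A_{\mu}$ up to a null set, so the strong basin $A_{\mu}$ of statistical attraction has positive Lebesgue measure and $\mu$ is physical. By Theorem~\ref{theoremEmpiricStochasticStableMeasure=Physical}, $\mu$ is then empirically stochastically stable, completing the proof.

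The main obstacle is exactly this connectedness upgrade: isolation together with pseudo-physicality yields only $\mu\in p\omega_x$ on a positive-measure set, and without the connectedness of $p\omega_x$ one could not exclude that these orbits also accumulate on measures of $\mathcal{PP}$ far from $\mu$, which would leave $\mu$ merely pseudo-physical rather than physical. The other point requiring care is the justification that $p\omega_x\subset\mathcal{PP}$ almost everywhere; if one prefers not to quote it directly from the pseudo-physical measures theory, it can be recovered from the global empiric stochastic stability of $\mathcal{PP}$ in Theorem~\ref{theoremEmpricStochasticStableSetsSubsetPseudoPhysical}(b) together with the elementary convergence $\sigma_{\epsilon,n,x}\to\sigma_{n,x}$ as $\epsilon\to 0^+$ for each fixed $n$, which identifies $\widehat A_{\mathcal{PP}}$ with $\{x\colon p\omega_x\subset\mathcal{PP}\}$ up to a null set.
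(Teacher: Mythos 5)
Your proof is correct and follows essentially the same route as the paper: isolate $\mu$ in a weak$^*$ ball, use pseudo-physicality to get a positive-measure set of points with $\mu\in p\omega_x$, use $p\omega_x\subset{\mathcal O}_f$ a.e.\ (Theorem~\ref{TheoremCE}), and then rule out other accumulation measures to conclude $p\omega_x=\{\mu\}$, hence $\mu$ is physical and Theorem~\ref{theoremEmpiricStochasticStableMeasure=Physical} applies. The only difference is that where the paper invokes Theorem~\ref{theoremCE-pwconnected} (the intermediate-distance property of $p\omega_x$ from \cite{Polaca}) to exclude isolated points of $p\omega_x$, you prove the needed connectedness of $p\omega_x$ directly from $\mbox{dist}^*(\sigma_{n,x},\sigma_{n+1,x})\to 0$, which is a valid, self-contained substitute.
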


 \begin{corollary} \label{corollary4}
 Let $f: M \mapsto M$   be a continuous map  on a compact Riemannian manifold $M$. Then, the following conditions are equivalent:

 \begin{itemize}
 \item[(i)  ] \ \ \ The set of pseudo-physical measures is finite.

 \item[(ii) ] \ \    There exists a finite number of (individually) empirically stochastically stable measures, hence physical measures, and the union of their strong basins of statistical attraction covers Lebesgue a.e.

 \end{itemize}
 \end{corollary}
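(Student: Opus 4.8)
My plan is to prove the two implications separately, using Theorem~\ref{theoremEmpiricStochasticStableMeasure=Physical} (empirically stochastically stable $=$ physical, with $\widehat A_\mu$ equal Lebesgue-a.e.\ to the strong basin $A_\mu$), Corollary~\ref{corollary3} (an isolated pseudo-physical measure is empirically stochastically stable, hence physical), together with two standard facts about the zero-noise empiric probabilities $\sigma_{n,x}$ of Definition~\ref{definitionEmpiricProbab&pomega}. The first fact is that $p\omega_x$ is nonempty for every $x$ (weak$^*$-compactness of $\mathcal M$) and, for Lebesgue-a.e.\ $x$, is contained in the set of pseudo-physical measures. The second is that $p\omega_x$ is always weak$^*$-connected: since $\sigma_{n+1,x} - \sigma_{n,x} = \frac{1}{n+1}\bigl(\delta_{f^{n+1}(x)} - \sigma_{n,x}\bigr)$, one has $\mbox{dist}^*(\sigma_{n+1,x}, \sigma_{n,x}) \to 0$, and the weak$^*$-limit set of a sequence with asymptotically vanishing consecutive gaps in a compact metric space is connected.

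For (i)~$\Rightarrow$~(ii), I would suppose the set of pseudo-physical measures is finite, say $\{\mu_1, \dots, \mu_k\}$. Being a finite subset of the Hausdorff space $\mathcal M$, each $\mu_i$ is isolated in that set, so Corollary~\ref{corollary3} gives that every $\mu_i$ is empirically stochastically stable, hence physical. To finish, I would show $\bigcup_{i=1}^k A_{\mu_i}$ has full Lebesgue measure: for a.e.\ $x$ we have $\emptyset \neq p\omega_x \subseteq \{\mu_1, \dots, \mu_k\}$, and since $p\omega_x$ is connected while the induced topology on a finite set is discrete, $p\omega_x$ must collapse to a single $\{\mu_i\}$. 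But $p\omega_x = \{\mu_i\}$ is exactly $x \in A_{\mu_i}$, so Lebesgue-a.e.\ $x$ lies in some strong basin $A_{\mu_i}$, which is (ii).

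For (ii)~$\Rightarrow$~(i), I would take the given empirically stochastically stable measures $\nu_1, \dots, \nu_m$; by Theorem~\ref{theoremEmpiricStochasticStableMeasure=Physical} each $\nu_j$ is physical, hence pseudo-physical, and by hypothesis $\bigcup_{j=1}^m A_{\nu_j}$ covers Lebesgue-a.e. Now let $\mu$ be an arbitrary pseudo-physical measure. For each $\rho>0$ the set $A^\rho_\mu$ has positive Lebesgue measure, so it meets some $A_{\nu_j}$ in positive measure; choosing $x$ in the intersection yields $p\omega_x = \{\nu_j\}$ and $\mbox{dist}^*(p\omega_x, \{\mu\}) < \rho$, whence $\mbox{dist}^*(\nu_j, \mu) < \rho$. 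As there are finitely many indices, one $j$ recurs along a sequence $\rho \to 0^+$, forcing $\mu = \nu_j$. Thus the pseudo-physical measures form a subset of $\{\nu_1, \dots, \nu_m\}$ and are in particular finite, giving (i).

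The hardest step will be the passage in (i)~$\Rightarrow$~(ii) from \emph{$p\omega_x$ lies in a finite set} to \emph{$p\omega_x$ is a single measure}: this is precisely where connectedness of $p\omega_x$ is indispensable, since without it one could only place $x$ in a $\rho$-weak basin, whereas (ii) demands membership in the \emph{strong} basin $A_{\mu_i}$. A secondary point needing care is reading $\mbox{dist}^*(p\omega_x, \{\mu\})$ as the Hausdorff distance, so that $x \in A^\rho_\mu$ combined with $p\omega_x = \{\nu_j\}$ legitimately delivers the scalar bound $\mbox{dist}^*(\nu_j, \mu) < \rho$ used in (ii)~$\Rightarrow$~(i).
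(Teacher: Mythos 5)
Your proof is correct, and it diverges from the paper's in one of the two implications, so a brief comparison is in order. For (i)~$\Rightarrow$~(ii) you follow essentially the same path as the paper: finiteness makes each pseudo-physical measure isolated, Corollary~\ref{corollary3} makes it empirically stochastically stable and physical, and the full-measure claim for $\bigcup_i A_{\mu_i}$ comes from $p\omega_x\subset{\mathcal O}_f$ a.e.\ (Theorem~\ref{TheoremCE}) plus the fact that a limit set contained in a finite set must be a singleton. The paper is actually terser here --- it simply identifies the union of the strong basins with $A_{{\mathcal O}_f}$ via equality (\ref{equation12}) --- and your explicit connectedness argument (vanishing consecutive gaps $\mbox{dist}^*(\sigma_{n+1,x},\sigma_{n,x})\to 0$) supplies exactly the step the paper leaves implicit; the paper's own substitute for this is Theorem~\ref{theoremCE-pwconnected}, which it deploys in the proofs of Corollaries~\ref{corollary3} and~\ref{corollary5}. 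For (ii)~$\Rightarrow$~(i) your route is genuinely different: the paper observes that $\bigcup_j A_{\nu_j}=A_{\{\nu_1,\ldots,\nu_m\}}$ has full measure, so $\{\nu_1,\ldots,\nu_m\}$ is a weak$^*$-compact set containing $p\omega_x$ a.e., and then invokes the \emph{minimality} assertion of Theorem~\ref{TheoremCE} to conclude ${\mathcal O}_f\subset\{\nu_1,\ldots,\nu_m\}$; you instead argue directly from Definition~\ref{definitionPhysicalMeasures}, intersecting the positive-measure weak basins $A^\rho_\mu$ with the full-measure union of strong basins and passing to the limit $\rho\to 0^+$ along a recurring index. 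Your argument is more elementary in that it uses only the first (containment) clause of Theorem~\ref{TheoremCE} rather than its minimality clause, at the cost of the small pigeonhole step at the end; both are sound, and your reading of $\mbox{dist}^*(p\omega_x,\{\mu\})$ is unproblematic since $p\omega_x$ collapses to a singleton on the relevant set.
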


\begin{corollary} \label{corollary5}
 If the set of   pseudo-physical measures is  countable, then there exists countably many  empirically stochastically stable measures, hence physical, and the union of their strong basins of statistical attractions covers Lebesgue a.e.
 \end{corollary}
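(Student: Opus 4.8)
The plan is to reduce the countable case to one elementary topological observation about countable metric spaces, combined with the standard description of Lebesgue-a.e.\ orbits by pseudo-physical measures and the equivalence (physical $\Leftrightarrow$ empirically stochastically stable) of Theorem \ref{theoremEmpiricStochasticStableMeasure=Physical}. Throughout write $\mathcal{PP} \subset {\mathcal M}_f$ for the set of pseudo-physical measures, which is nonempty and, by hypothesis, countable.

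First I would record the following topological lemma: \emph{every connected subset $C$ of a countable metric space is empty or a single point.} Indeed, if $a \neq b$ were two points of $C$ with $\mbox{dist}^*(a,b) = \delta>0$, then $\{\mbox{dist}^*(a,y): y \in C\}$ is a countable subset of $\mathbb{R}$, so some value $r \in (0,\delta)$ is not attained; the relatively open sets $\{y \in C : \mbox{dist}^*(a,y) < r\}$ and $\{y \in C : \mbox{dist}^*(a,y) > r\}$ are then disjoint, cover $C$, and separate $a$ from $b$, contradicting connectedness. In particular $\mathcal{PP}$, being countable, is totally disconnected.

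Next I would invoke the two standard facts about the $p\omega$-limit sets (from \cite{Polaca}, and underlying the proofs of Theorems \ref{theoremEmpiricStochasticStableMeasure=Physical} and \ref{theoremEmpricStochasticStableSetsSubsetPseudoPhysical}): for Lebesgue-a.e.\ $x \in M$ the set $p\omega_x$ is nonempty, compact and connected (connectedness following from $\mbox{dist}^*(\sigma_{n,x}, \sigma_{n+1,x}) \to 0$), and moreover $p\omega_x \subseteq \mathcal{PP}$. Combining these with the lemma, for Lebesgue-a.e.\ $x$ the set $p\omega_x$ is a connected subset of the countable set $\mathcal{PP}$, hence a single measure: $p\omega_x = \{\mu\}$ with $\mu \in \mathcal{PP}$. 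By Definition \ref{definitionBasinStatisticalAttraction} this says $x \in A_\mu$, so $\bigcup_{\mu \in \mathcal{PP}} A_\mu$ has full Lebesgue measure.

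Finally I would discard the measures with negligible basin. Since $\mathcal{PP}$ is countable, the union of those $A_\mu$ with $m(A_\mu)=0$ is Lebesgue-null, so $\bigcup\{A_\mu : \mu \in \mathcal{PP},\ m(A_\mu) > 0\}$ already has full Lebesgue measure. Each such $\mu$ has a strong basin of positive Lebesgue measure, hence is physical by Definition \ref{definitionPhysicalMeasures}, hence empirically stochastically stable by Theorem \ref{theoremEmpiricStochasticStableMeasure=Physical}; this yields the desired at-most-countable family of empirically stochastically stable (physical) measures whose strong basins of statistical attraction cover Lebesgue-a.e.\ point of $M$. The main obstacle is not in this final combinatorics, which is elementary once the lemma is in hand, but in the input fact $p\omega_x \subseteq \mathcal{PP}$ for Lebesgue-a.e.\ $x$: this is the foundational property of pseudo-physical measures and is precisely what turns the countability hypothesis into the singleton conclusion via total disconnectedness.
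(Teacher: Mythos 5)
Your argument is correct and follows the paper's own route almost exactly: both reduce the statement to the facts that $p\omega_x \subset {\mathcal O}_f$ for Lebesgue-a.e.\ $x$ (Theorem \ref{TheoremCE}) and that a $p\omega$-limit set contained in a countable set must be a singleton, then discard the (at most countably many) measures with null basin and invoke Theorem \ref{theoremEmpiricStochasticStableMeasure=Physical}. The only real difference is how the singleton step is justified: the paper cites Theorem \ref{theoremCE-pwconnected} (from \cite{Polaca}), which says $p\omega_x$ realizes every intermediate distance between two of its points and is therefore either a single measure or uncountable; you instead prove the elementary lemma that a connected subset of a countable metric space is a point, and appeal to connectedness of $p\omega_x$ (via $\mbox{dist}^*(\sigma_{n,x},\sigma_{n+1,x})\to 0$), a standard fact that the paper does not state explicitly but which is equivalent in effect. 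These are two faces of the same observation, so nothing is lost. Two minor remarks: (1) the paper first disposes of the finite case by Corollary \ref{corollary4}, whereas your argument handles finite and countably infinite $\mathcal{O}_f$ uniformly, which is cleaner; (2) the paper's proof ends by showing that when ${\mathcal O}_f$ is countably infinite the resulting family of physical measures is necessarily infinite (otherwise Corollary \ref{corollary4} would force ${\mathcal O}_f$ to be finite) --- your proof only delivers \emph{at most} countably many, so if \lq\lq countably many\rq\rq\ is read as \lq\lq countably infinite when ${\mathcal O}_f$ is infinite\rq\rq, you should append that one-line contradiction argument.
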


 \begin{corollary}
 \label{corollaryAllC1ExpandingMapsInTheCircle} For all $C^1$-expanding maps of the circle, all the measures of any empirically stochastically stable set ${\mathcal K}$ satisfy Pesin Entropy Formula  \em (\ref{eqnPesinEntropyFormula}).
 \end{corollary}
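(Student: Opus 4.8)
The plan is to obtain this corollary as an immediate consequence of part (a) of Theorem \ref{theoremEmpricStochasticStableSetsSubsetPseudoPhysical}, combined with a known entropy characterization of pseudo-physical measures that is specific to one-dimensional $C^1$-expanding dynamics. The chain of implications is short: empiric stochastic stability of ${\mathcal K}$ forces each of its measures to be pseudo-physical, and for a $C^1$-expanding circle map pseudo-physicality by itself already yields the equality in Pesin's Entropy Formula (\ref{eqnPesinEntropyFormula}).

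First I would apply Theorem \ref{theoremEmpricStochasticStableSetsSubsetPseudoPhysical}(a). Since ${\mathcal K}$ is by hypothesis a nonempty weak$^*$-compact set of $f$-invariant measures that is empirically stochastically stable, that theorem gives that every $\mu \in {\mathcal K}$ is pseudo-physical. I emphasize that this step uses nothing about the regularity of $f$ beyond continuity, so it transfers verbatim to the class of $C^1$-expanding maps of the circle; in particular the argument covers \emph{all} such maps, not merely the $C^1$-generic or $C^2$ ones.

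The second, and substantive, step is to show that for a $C^1$-expanding map $f$ of the circle every pseudo-physical measure $\mu$ satisfies $h_{\mu}(f) = \int \log|f'|\, d\mu$. The upper bound $h_{\mu}(f) \leq \int \log|f'|\, d\mu$ is Ruelle's inequality and holds for every invariant measure, already in class $C^1$. For the reverse inequality I would use that $\mu$ is pseudo-physical: for every $\rho>0$ its $\rho$-weak basin of statistical attraction $A^{\rho}_{\mu}$ has positive Lebesgue measure, so there is an abundance of orbits whose empiric distributions accumulate near $\mu$. Along such Lebesgue-generic orbits the expansion rate is governed by $\int \log|f'|\, d\mu$, and a covering/pressure estimate converts this into the lower bound $h_{\mu}(f) \geq \int \log|f'|\, d\mu$ for the accumulated measure. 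This is precisely the content of the established result on SRB-like (pseudo-physical) measures for $C^1$-expanding circle maps, which I would invoke as a cited external theorem; combining it with the previous step finishes the proof, since every $\mu \in {\mathcal K}$ is then forced to realize (\ref{eqnPesinEntropyFormula}).

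The hard part is entirely concentrated in this second step, and the delicacy is the merely-$C^1$ regularity. For $C^2$ (or even $C^{1+\alpha}$) expanding maps one has bounded distortion and a unique absolutely continuous invariant measure that is physical and realizes Pesin's formula, which trivializes the statement. In the purely $C^1$ setting such an absolutely continuous invariant measure need not exist, yet pseudo-physical measures do exist by the general theory developed in this paper, and the entropy lower bound must be produced without distortion control. Obtaining that bound, i.e. showing that the loss of differentiability does not allow the metric entropy of a pseudo-physical measure to drop strictly below its Lyapunov exponent, is the crux, and it is exactly what the cited $C^1$-expanding result supplies.
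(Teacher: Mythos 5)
Your proposal is correct and follows essentially the same route as the paper: apply part (a) of Theorem \ref{theoremEmpricStochasticStableSetsSubsetPseudoPhysical} to conclude every measure in ${\mathcal K}$ is pseudo-physical, then invoke the external result (proved in \cite{Portugalia}) that for $C^1$-expanding circle maps every pseudo-physical, i.e.\ SRB-like, measure satisfies Pesin's Entropy Formula. Your additional sketch of why that cited result holds (Ruelle's inequality plus a lower bound extracted from the positive-Lebesgue-measure weak basins) is accurate commentary but not needed, since the paper likewise treats it as a black box.
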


 \begin{corollary}
 \label{corollaryC0genericMapsOfTheInteval}
 For $C^0$-generic maps of the interval, the globally empirically stochastically stable set ${\mathcal K}$ of invariant measures includes all the ergodic measures but is meager in the whole space of invariant measures.
 \end{corollary}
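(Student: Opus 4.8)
The plan is to reduce the statement to a purely measure-theoretic assertion about the set of pseudo-physical measures, and then to feed in the known description of the statistics of $C^0$-generic interval maps. By Theorem \ref{theoremEmpricStochasticStableSetsSubsetPseudoPhysical}(b) (equivalently, by Corollary \ref{corollary2}), for \emph{every} continuous map the globally empirically stochastically stable set ${\mathcal K}$ exists, is unique, and coincides with the set ${\mathcal O}$ of all pseudo-physical measures (Definition \ref{definitionPhysicalMeasures}). Thus it suffices to prove that, for a residual set of maps of the interval in the $C^0$-topology, the weak$^*$-compact set ${\mathcal O}$ (i) contains every ergodic $f$-invariant measure and (ii) is meager in ${\mathcal M}_f$. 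First I would record that ${\mathcal O}$ is always weak$^*$-closed, so that for (ii) it is enough to show that ${\mathcal O}$ has empty interior in ${\mathcal M}_f$: since ${\mathcal M}_f$ is compact metrizable, hence a Baire space, a closed set with empty interior is automatically nowhere dense and meager.

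For (i) I would invoke the generic statistical description of continuous interval maps (of Abdenur--Andersson type): for a residual set of maps, and for Lebesgue-almost every $x$, the $p\omega$-limit set $p\omega_x$ (Definition \ref{definitionBasinStatisticalAttraction}) accumulates on every ergodic measure. Concretely, given an ergodic $\mu$ and $\rho>0$, the $C^0$-genericity of $f$ produces a positive-Lebesgue-measure set of initial conditions whose empirical probabilities $\sigma_{n,x}$ return infinitely often into the weak$^*$ $\rho$-ball around $\mu$. This forces $\mbox{dist}^*(p\omega_x,\{\mu\})<\rho$ on a set of positive Lebesgue measure, so that the weak basin $A^{\rho}_{\mu}$ has positive measure for every $\rho$, which is exactly pseudo-physicality of $\mu$. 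The delicate point here is the passage from ``$\mu$-typical behaviour'' (positive mass for the invariant measure) to ``positive mass for Lebesgue''; I would obtain it from the fact that $C^0$-generic interval maps possess abundant shadowing together with locally non-expanding (flat) pieces, so that the statistics prescribed along reference orbits are reproduced on whole subintervals of initial conditions.

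The meagerness in (ii) is where the main difficulty lies. Having reduced it to ``${\mathcal O}$ has empty interior'', I would exhibit a residual family of invariant measures that are \emph{not} pseudo-physical. The mechanism I would exploit is that, generically, ${\mathcal M}_f$ is a very large simplex (e.g. $f$ has infinite topological entropy), while the members of ${\mathcal O}$ must arise as accumulation points, for a positive-Lebesgue set of orbits, of the empirical distributions $\sigma_{n,x}$; a Baire-category argument inside ${\mathcal M}_f$ should then show that the ``spread-out'' invariant measures and nontrivial convex combinations, which form a residual subfamily, cannot occur as such accumulation points and hence lie outside ${\mathcal O}$. Combined with (i) this yields $\overline{{\mathcal E}}\subseteq {\mathcal O}\subsetneq {\mathcal M}_f$ with ${\mathcal O}$ nowhere dense, so that the set ${\mathcal E}$ of ergodic measures is itself nowhere dense although entirely contained in ${\mathcal K}={\mathcal O}$. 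The hardest part is precisely to establish this empty-interior property while preserving (i): one must run the genericity argument in the space of maps and the category argument in the simplex of measures \emph{simultaneously}, checking that the perturbations which render every ergodic measure pseudo-physical never make an open set of invariant measures pseudo-physical.
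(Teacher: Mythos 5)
Your opening reduction is exactly the paper's: by Theorem \ref{theoremEmpricStochasticStableSetsSubsetPseudoPhysical}(b) (or Corollary \ref{corollary2}) the globally empirically stochastically stable set ${\mathcal K}$ exists, is unique, and equals the set ${\mathcal O}_f$ of pseudo-physical measures, so the corollary reduces to two facts about $C^0$-generic interval maps: every ergodic measure lies in ${\mathcal O}_f$, and ${\mathcal O}_f$ has empty interior in ${\mathcal M}_f$ (being weak$^*$-closed, this gives nowhere density, hence meagerness). Up to this point you match the paper. The divergence is that the paper obtains both generic facts by citing an external result (Catsigeras--Troubetzkoy, \cite{Cat-Troubetzkoy}, on pseudo-physical measures for typical continuous interval maps), whereas you attempt to re-derive them, and that attempt is where the proposal has genuine gaps.

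Concretely: for (i) you assert that generic interval maps have ``abundant shadowing together with locally non-expanding (flat) pieces'' which reproduce prescribed statistics on whole subintervals, but you give no construction of the residual set of maps and no argument that these mechanisms coexist generically; you yourself flag the passage from $\mu$-typical to Lebesgue-positive as the delicate point and then only gesture at it. For (ii) you state that ``a Baire-category argument inside ${\mathcal M}_f$ should then show'' that a residual family of invariant measures fails to be pseudo-physical, and you explicitly call the simultaneous control of (i) and (ii) ``the hardest part'' without carrying it out. These are precisely the nontrivial contents of the cited reference; as written, your text is a program rather than a proof of them. If you are allowed to quote \cite{Cat-Troubetzkoy}, your argument collapses to the paper's two-line proof and is correct; if not, the two generic assertions remain unproved and the corollary does not follow from what you have written.
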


 \section{Proof of Theorem \ref{theoremEmpiricStochasticStableMeasure=Physical} and its corollaries.}

\label{sectionProofTheorem1}

 We decompose the proof of Theorem \ref{theoremEmpiricStochasticStableMeasure=Physical} into several lemmas:

\begin{lemma}
\label{lemma1}

For $\epsilon >0$  small enough:

\noindent \em (a) \em  The transformation  $x \in M \mapsto p_{\epsilon} (x, \cdot) \in {\mathcal M}$ is continuous.

\noindent \em (b) \em  The transfer operator ${\mathcal L}_{\epsilon}^*: {\mathcal M} \mapsto {\mathcal M}$ is continuous.

\noindent \em (c) \em The transformation $x \in M \mapsto \sigma_{\epsilon, n,x} \in {\mathcal M}$ is continuous.

\noindent \em (d) \em  ${\lim}^*_{\epsilon \rightarrow 0^+} p_{\epsilon}(x, \cdot) = \delta_{f(x)}$ uniformly on $ M$.

\noindent \em (e) \em  ${\lim}^*_{\epsilon \rightarrow 0^+} {{\mathcal L}^*_{\epsilon}}^n \delta_x = \delta_{f^n(x)} $ uniformly on $ M$.

\noindent \em (f) \em  ${\lim}^*_{\epsilon \rightarrow 0^+}   \sigma_{\epsilon, n, x}  = \sigma_{n, x} $ uniformly on $ M$.

\end{lemma}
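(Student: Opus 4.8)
The plan is to establish the items in the order (a) and (d) first (they rest on the same geometric facts about small balls), then (b) from (a) by duality, (c) by iterating (b), then the key item (e) by induction with base case (d), and finally (f) as an immediate consequence of (e). Before any of this, I would fix a convenient reformulation of ``uniform weak$^*$ convergence.'' Since $\mbox{dist}^*$ metrizes the weak$^*$ topology on the compact space $\mathcal M$, I would fix a countable family $\{\varphi_k\}_{k\in\mathbb N}\subset C^0(M,\mathbb R)$ that is determining for weak$^*$ convergence, normalized so that $\|\varphi_k\|_\infty\le 1$, and recall the standard estimate bounding $\mbox{dist}^*(\mu,\nu)$ by a tail $\sum_{k>K}2^{-k}$ plus a finite sum $\sum_{k\le K}|\int\varphi_k\,d\mu-\int\varphi_k\,d\nu|$. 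This reduces every ``uniform on $M$'' statement to showing, for each fixed $k$, that $\int\varphi_k\,d\mu_{\epsilon,x}-\int\varphi_k\,d\nu_x\to 0$ as $\epsilon\to 0^+$ uniformly in $x$; the tail is disposed of once and for all by choosing $K$ large.

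For item (d) I would use that $p_\epsilon(x,\cdot)$ is supported on $B_\epsilon(f(x))$ and estimate, for a fixed continuous $\varphi$,
\[
\Big|\int\varphi\,dp_\epsilon(x,\cdot)-\varphi(f(x))\Big|
=\Big|\frac{1}{m(B_\epsilon(f(x)))}\int_{B_\epsilon(f(x))}\big(\varphi(y)-\varphi(f(x))\big)\,dm(y)\Big|
\le \omega_\varphi(\epsilon),
\]
where $\omega_\varphi$ is the modulus of continuity of $\varphi$ on the compact manifold $M$; since $\omega_\varphi(\epsilon)\to 0$ independently of $x$, this gives the uniform convergence. Item (a) follows from the same circle of ideas: for $\epsilon$ below the injectivity radius the volume $z\mapsto m(B_\epsilon(z))$ is continuous and bounded away from $0$, and the symmetric difference $B_\epsilon(f(x'))\,\triangle\,B_\epsilon(f(x))$ has $m$-measure tending to $0$ as $x'\to x$ (using continuity of $f$ and that geodesic spheres of radius $\epsilon$ are $m$-null), so $\int\varphi\,dp_\epsilon(x',\cdot)\to\int\varphi\,dp_\epsilon(x,\cdot)$ for every continuous $\varphi$. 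This is precisely why the statement is restricted to ``$\epsilon>0$ small enough.'' Item (b) is then a one-line duality argument: if $\mu_i\to\mu$ weak$^*$, then by (\ref{equationL^*}) $\int\varphi\,d(\mathcal L_\epsilon^*\mu_i)=\int(\mathcal L_\epsilon\varphi)\,d\mu_i\to\int(\mathcal L_\epsilon\varphi)\,d\mu=\int\varphi\,d(\mathcal L_\epsilon^*\mu)$, because $\mathcal L_\epsilon\varphi\in C^0(M,\mathbb C)$ by (a). Item (c) follows by writing $\sigma_{\epsilon,n,x}=\frac1n\sum_{j=1}^n(\mathcal L_\epsilon^*)^j\delta_x$, observing that $x\mapsto\delta_x$ is weak$^*$ continuous and $(\mathcal L_\epsilon^*)^j$ is a finite iterate of the continuous map of (b); equivalently, by (\ref{equationInt_sigma_epsilon,n,x}), $\int\varphi\,d\sigma_{\epsilon,n,x}=\frac1n\sum_{j=1}^n(\mathcal L_\epsilon^j\varphi)(x)$ is continuous in $x$.

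The heart of the lemma is item (e), which I would prove by induction on $n$, the base case $n=1$ being (d). For the inductive step I would fix a continuous $\varphi$ and write, using (\ref{equationL^*}),
\[
\int\varphi\,d\big((\mathcal L_\epsilon^*)^n\delta_x\big)
=\int(\mathcal L_\epsilon\varphi)\,d\big((\mathcal L_\epsilon^*)^{n-1}\delta_x\big).
\]
The subtle point is that the integrand $\mathcal L_\epsilon\varphi$ itself depends on $\epsilon$, so one must control two limits at once. I would split
\[
\int(\mathcal L_\epsilon\varphi)\,d\big((\mathcal L_\epsilon^*)^{n-1}\delta_x\big)
=\int(\varphi\circ f)\,d\big((\mathcal L_\epsilon^*)^{n-1}\delta_x\big)
+\int(\mathcal L_\epsilon\varphi-\varphi\circ f)\,d\big((\mathcal L_\epsilon^*)^{n-1}\delta_x\big),
\]
bounding the second term by $\|\mathcal L_\epsilon\varphi-\varphi\circ f\|_\infty$, which tends to $0$ uniformly in $x$ by (d) read as uniform $C^0$-convergence $\mathcal L_\epsilon\varphi\to\varphi\circ f$. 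For the first term, $\varphi\circ f$ is a fixed continuous function, so the inductive hypothesis $(\mathcal L_\epsilon^*)^{n-1}\delta_x\to\delta_{f^{n-1}(x)}$ uniformly gives $\int(\varphi\circ f)\,d((\mathcal L_\epsilon^*)^{n-1}\delta_x)\to\varphi(f^n(x))$ uniformly in $x$. Adding the two estimates yields $\int\varphi\,d((\mathcal L_\epsilon^*)^n\delta_x)\to\varphi(f^n(x))$ uniformly, which through the determining family $\{\varphi_k\}$ delivers the uniform weak$^*$ convergence of (e). Finally, (f) is immediate: $\sigma_{\epsilon,n,x}-\sigma_{n,x}=\frac1n\sum_{j=1}^n\big((\mathcal L_\epsilon^*)^j\delta_x-\delta_{f^j(x)}\big)$ is a finite average of terms each converging to $0$ uniformly on $M$. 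I expect the only genuine obstacle to be making the double limit in (e) rigorous, that is, checking that uniformity in $x$ survives both the $\epsilon$-dependence of the integrand $\mathcal L_\epsilon\varphi$ and the composition of $n$ noisy transfer steps; the splitting above is designed exactly to decouple these two effects.
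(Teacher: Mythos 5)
Your proposal is correct and follows essentially the same route as the paper: (a) and (d) from the geometry of small balls and uniform continuity, (b) by duality, (c) by composition, (e) by induction on $n$ with exactly the same splitting $\int(\mathcal L_\epsilon\varphi)\,d(\mathcal L_\epsilon^*)^{n-1}\delta_x=\int(\varphi\circ f)\,d(\mathcal L_\epsilon^*)^{n-1}\delta_x+\int(\mathcal L_\epsilon\varphi-\varphi\circ f)\,d(\mathcal L_\epsilon^*)^{n-1}\delta_x$ that the paper uses to decouple the two limits, and (f) by averaging. The only cosmetic differences are that you reduce the weak$^*$ metric to a countable determining family rather than the paper's ``finite number of test functions up to error $\rho$,'' and you supply slightly more detail for (a), which the paper treats as immediate.
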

\begin{proof}
\smartqed
 \noindent   (a) :    It is immediate from the construction of the probability measure $p_{\epsilon}(x, \cdot)$ by equality (\ref{equationp_epsilon}), and taking into account that the Lebesgue measure restricted to a ball of radius $\epsilon$ depends continuously on the center of the ball.

  \noindent (b):  Take a convergent sequence $\{\mu_i\}_{i \in \mathbb{N}} \subset {\mathcal M}$  and denote $\mu = \lim_i^* \mu_i$. For any continuous function  $\varphi: M \mapsto M$, we have
  \begin{equation}
  \label{eqn03}
  \int \varphi d {\mathcal L}_{\epsilon}^* \mu_i = \int {\mathcal L}_{\epsilon} \varphi \, d \mu_i .\end{equation}
  Since $(\mathcal L_{\epsilon} \varphi) (x) = \int \varphi(y) p_{\epsilon}(x, dy)$ and $p_{\epsilon}(x, \cdot)$ depends continuously on $x$, we deduce that ${\mathcal L}_{\epsilon} \varphi$ is a continuous function. So, from (\ref{eqn03}) and the definition of the weak$^*$ topology in ${\mathcal M}$, we obtain:
 $$ \lim_{i \rightarrow + \infty} \int \varphi d {\mathcal L}_{\epsilon}^* \mu_i = \lim_{i \rightarrow + \infty}\int {\mathcal L}_{\epsilon} \varphi \, d \mu_i =  \int {\mathcal L}_{\epsilon} \varphi \, d \mu =  \int \varphi d {\mathcal L}_{\epsilon}^* \mu .  $$
 We conclude that $\lim_i^*{\mathcal L}_{\epsilon}^* \mu_i = {\mathcal L}_{\epsilon}^* \mu, $ hence ${\mathcal L}^*_{\epsilon}$ is a continuous operator on ${\mathcal M}$.

  \noindent (c):   Since the composition of continuous operators is continuous, we have that ${{\mathcal L}_{\epsilon}^*}^j: {\mathcal M} \mapsto {\mathcal M}$ is continuous for each fixed $j \in {\mathbb{N}^+}$. Besides, it is immediate to check that the transformation $x \in M \mapsto \delta_x \in {\mathcal M}$ is continuous. Thus, also the transformation $x \in M \mapsto {{\mathcal L}_{\epsilon}^*}^j \delta_x \in {\mathcal M} $  is continuous. We conclude that, for fixed $\epsilon >0$ and fixed $n \in \mathbb{N}^+$, the transformation
  $$x \in M \ \mapsto \ \sigma_{\epsilon, n, x} = \frac{1}{n} \sum_{j= 1}^n {{\mathcal L}_{\epsilon}^*}^j \delta_x \in {\mathcal M}$$
  is continuous.

  \noindent (d):   For any given $\rho>0$ we shall find $\epsilon_0 >0$ (independent on $x \in M$) such that, $\mbox{dist}^*(p_{\epsilon} (x, \cdot), \ \delta_{f(x)}) <\rho$ for all $0 <\epsilon <\epsilon_0$ and for all $x \in M$. For any metric $\mbox{dist}^*$ that endows the weak$^*$ topology in ${\mathcal M}$, the inequality $\mbox{dist}^*(p_{\epsilon} (x, \cdot), \ \delta_{f(x)}) <\rho$ holds, if and only if, for a \em finite \em number (which depends on $\rho$ and on the metric) of continuous functions $\varphi: M \mapsto \mathbb{C}$, the difference $|\int \varphi(y) \, p_{\epsilon}(x, dy) - \varphi (f(x)) |$ is smaller than a certain $\epsilon'>0$ (which depends on $\rho$ and on the metric).  Let us fix such a continuous function $\varphi$. Since $M$ is compact, $\varphi$ is uniformly continuous on $M$. Thus, for any $\epsilon' >0$ there exists $\epsilon_0$ such that, if $\mbox{dist}(y_1, y_2) <\epsilon \leq \epsilon_0$, then
  $|\varphi(y_1) - \varphi(y_2) | < \epsilon'.$ Since $p_{\epsilon}(x, \cdot)$ is supported on the ball $B_{\epsilon}(f(x))$, we deduce:
  $$\Big |\int \varphi(y) p_{\epsilon}(x, dy) - \varphi(f(x))\Big| \leq \int \big| \varphi(y) - \varphi (f(x)\big | \,  p_{\epsilon}(x, dy) \leq \epsilon',$$
  because $\mbox{dist} (y, f(x)) < \epsilon \leq \epsilon_0$ for $p_{\epsilon}(x, \cdot)$- a.e. $y \in M$.

  Since $\epsilon_0$ does not depend on $x$,  we have proved that ${\lim}^*_{\epsilon \rightarrow 0^+} p_{\epsilon}(x, \cdot) = \delta_{f(x)}$ uniformly for all $x \in M$.

\noindent (e):  Let us prove that $\lim_{\epsilon \rightarrow 0^+} {{\mathcal L}_{\epsilon}^*}^n \delta_x = \delta_{f^n(x)}$ uniformly on $x \in M$. By induction on $n \in \mathbb{N}^+$:

If $n= 1$,  for any continuous function $\varphi: M \mapsto {\mathbb{C}}$ we compute the following  integral
$$\int \varphi \, d{{\mathcal L}_{\epsilon}^*}  \delta_x = \int ({{\mathcal L}_{\epsilon} }\varphi) \, d \delta_x = ({{\mathcal L}_{\epsilon} }\varphi) (x) = \int \varphi(y) \, p_{\epsilon}(x, dy).  $$
From the unicity of the probability measure of Riesz Representation Theorem, we obtain ${{\mathcal L}_{\epsilon}^*}  \delta_x = p_{\epsilon}(x, \cdot)$. Applying part d), we conclude
$${\lim}^*_{\epsilon  \rightarrow 0^+}{{\mathcal L}_{\epsilon}^*}  \delta_x  = {\lim}^*_{\epsilon  \rightarrow 0^+}p_{\epsilon}(x, \cdot) = \delta_{f(x)}, \mbox{ uniformly on } x \in M.$$

Now, assume that, for some $n \in \mathbb{N}^+$, the following assertion holds: \begin{equation} \label{eqn04}{\lim}^*_{\epsilon  \rightarrow 0^+}{{\mathcal L}_{\epsilon}^*}^n  \delta_x   = \delta_{f^n(x)}, \mbox{ uniformly on } x \in M.\end{equation}
Let us prove the same assertion for $n+1$, instead of $n$:
Fix a continuous function $\varphi: M \mapsto \mathbb{C}$. As proved in part d), for any $\epsilon' >0$, there exists $\epsilon_0 >0$ (independent on $x \in M$) such that
$$|{\mathcal L}_{\epsilon} \varphi) (x) - \varphi(f(x))| = |\int \varphi(y) p_{\epsilon}(x, dy) - \varphi(f(x)) | < \frac{\epsilon'}{2} \ \ \ \forall \ 0 <\epsilon \leq \epsilon_0, \ \ \forall \ x \in M.$$
Thus

$$\Big | \int \varphi \, d {{\mathcal L}_{\epsilon}^*}^{n+1} \delta_x - \int (\varphi \circ f) \, d  {{\mathcal L}_{\epsilon}^*}^{n } \delta_x     \Big| =  \Big | \int ({\mathcal L}_{\epsilon}  \varphi) \, d {{\mathcal L}_{\epsilon}^*}^{n} \delta_x - \int (\varphi \circ f) \, d  {{\mathcal L}_{\epsilon}^*}^{n } \delta_x     \Big|     $$ \begin{equation}
\label{eqn05} \leq \int \big|{\mathcal L}_{\epsilon}  \varphi)  -\varphi \circ f \big| \, d  {{\mathcal L}_{\epsilon}^*}^{n } \delta_x     \Big|  <\frac{\epsilon'}{2} \ \ \ \forall \ 0 <\epsilon \leq \epsilon_0, \ \ \forall \ x \in M.
\end{equation}
Besides, the induction assumption (\ref{eqn04}) implies that, if $\epsilon_0$ is chosen small enough, then
for the continuous function $\varphi \circ f$ the following inequality holds:
$$ \Big| \int  (\varphi \circ f ) \, d  {{\mathcal L}_{\epsilon}^*}^{n } \delta_x  -   \varphi ( f^{n+1}(x) )    \Big| = $$ \begin{equation}
\label{eqn06}= \Big| \int  (\varphi \circ f ) \, d  {{\mathcal L}_{\epsilon}^*}^{n } \delta_x  - \int (\varphi \circ f ) \, d \delta_{f^n(x)}   \Big|    <\frac{\epsilon'}{2} \ \ \ \forall \ 0 <\epsilon \leq \epsilon_0, \ \ \forall \ x \in M.
\end{equation}
Joining inequalities (\ref{eqn05}) and (\ref{eqn06}) we deduce that for all $\epsilon' >0$, there exists $\epsilon_0>0$ (independent of $x$) such that
$$\Big | \int \varphi \, d {{\mathcal L}_{\epsilon}^*}^{n+1} \delta_x - \int \varphi  \, d  \delta_{f^{n+1}(x)}   \Big| < \epsilon' \ \ \ \ \forall \ 0 <\epsilon \leq \epsilon_0, \ \ \forall \ x \in M. $$
In other words:
$${\lim}^*_{\epsilon \rightarrow 0^+} {{\mathcal L}_{\epsilon}^*}^{n+1} \delta_x  = \delta_{f^{n+1}(x)} \ \ \mbox{ uniformly on } x \in M,$$
ending the proof of part (e).

\noindent (f):  Since $\sigma_{\epsilon, n, x} = \frac{1}{n} \sum_{j=1}^{n} {{\mathcal L}^*_{\epsilon}}^j \delta_{x}$, applying part (e) to each probability measure ${{\mathcal L}^*_{\epsilon}}^j \delta_{x}$, we deduce that
$${\lim}^*_{\epsilon \rightarrow 0^+} {{\mathcal L}_{\epsilon}^*}^{n+1} \delta_x  = \frac{1}{n} \sum_{j= 1}^n\delta_{f^{j}(x)} = \sigma_{n,x} \ \ \mbox{ uniformly on } x \in M,$$
ending the proof of Lemma \ref{lemma1}.
\qed
\end{proof}

\begin{lemma}
\label{lemma3}
For any probability measure $\mu$ consider the (maybe empty) basin of stochastic stability $\widehat A_{\mu}$ defined by equality \em (\ref{equation01})\em, and the (maybe empty) strong basin of statistical attraction $A_{\mu}$  defined by equality \em (\ref{equation02})\em.

Then, $\widehat A_{\mu}$ and $A_{\mu}$ are measurable sets and coincide. Besides, they satisfy the following equality:
\begin{equation} \label{equation03}\widehat A_{\mu} = A_{\mu} = \bigcap_{k \in \mathbb{N}^+} \bigcup_{N \in \mathbb{N}^+} \bigcap_{n \geq N} C_{n, \ 1/k } (\mu), \end{equation} where, for any   real number $\rho>0$ and any natural number $n \geq 1$,  the set $C_{n, \ \rho}(\mu) $ is defined by
$$ C_{n, \ \rho}(\mu) := \{ x \in M \colon \ \ \mbox{dist}^*(\sigma_{n, x}, \ \mu) < \rho\}.$$
\end{lemma}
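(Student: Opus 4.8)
The plan is to reduce everything to the real-valued sequence $a_n(x) := \mbox{dist}^*(\sigma_{n,x},\mu)$ and to exploit Lemma \ref{lemma1}(f), which allows passage from the noisy empiric measures $\sigma_{\epsilon,n,x}$ to the zero-noise empiric measures $\sigma_{n,x}$.

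First I would record that $A_\mu = \{x \in M : a_n(x) \to 0\}$. Indeed, since $\mathcal M$ is weak$^*$-compact and metrizable, the sequence $\{\sigma_{n,x}\}_n$ is precompact, so its set $p\omega_x$ of limit points reduces to the singleton $\{\mu\}$ if and only if $\sigma_{n,x} \to \mu$, i.e. $a_n(x) \to 0$. Unwinding the definition of convergence to $0$ through the cofinal family $\{1/k\}_{k \in \mathbb{N}^+}$ gives
$$A_\mu = \{x : a_n(x) \to 0\} = \bigcap_{k \in \mathbb{N}^+} \bigcup_{N \in \mathbb{N}^+} \bigcap_{n \geq N} C_{n,\,1/k}(\mu),$$
which is the right-hand member of (\ref{equation03}). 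Because $f$ is continuous, $x \mapsto \sigma_{n,x} = \frac1n \sum_{j=1}^n \delta_{f^j(x)}$ is weak$^*$-continuous, so each $a_n$ is continuous, each $C_{n,\rho}(\mu)$ is open, and the displayed formula exhibits $A_\mu$ as a Borel set.

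Second, and this is the heart of the argument, I would identify $\widehat A_\mu$ with the same set. Fix $n$ and $x$. By Lemma \ref{lemma1}(f) we have $\sigma_{\epsilon,n,x} \to \sigma_{n,x}$ as $\epsilon \to 0^+$, hence by continuity of the metric $\mbox{dist}^*(\sigma_{\epsilon,n,x},\mu) \to a_n(x)$. Therefore the inner condition of (\ref{equation01}) --- that there exist $\epsilon_0>0$ with $\mbox{dist}^*(\sigma_{\epsilon,n,x},\mu)<\rho$ for every $0<\epsilon\leq\epsilon_0$ --- is sandwiched between two conditions on $a_n(x)$: the inequality $a_n(x)<\rho$ forces it, while it in turn forces $a_n(x)\leq\rho$. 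Writing $\overline C_{n,\rho}(\mu) := \{x : a_n(x) \leq \rho\}$, this reads $C_{n,\rho}(\mu) \subseteq D_{n,\rho} \subseteq \overline C_{n,\rho}(\mu)$, where $D_{n,\rho}$ denotes the set of $x$ satisfying the inner condition. Since the predicate defining $\widehat A_\mu$ is monotone in $\rho$, the outer quantifier over all $\rho>0$ may be replaced by the quantifier over $\rho = 1/k$, $k \in \mathbb{N}^+$, which yields
$$\bigcap_{k}\bigcup_{N}\bigcap_{n \geq N} C_{n,\,1/k}(\mu) \ \subseteq\ \widehat A_\mu \ \subseteq\ \bigcap_{k}\bigcup_{N}\bigcap_{n \geq N} \overline C_{n,\,1/k}(\mu).$$

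Finally I would close the sandwich by noting that its two extreme sets coincide: the condition that for every $k$ one has $a_n(x)<1/k$ for all large $n$, and the condition that for every $k$ one has $a_n(x)\leq 1/k$ for all large $n$, are both reformulations of $a_n(x)\to 0$ (using $a_n\geq 0$ and the cofinality of $\{1/k\}$). Hence both extreme sets equal $\{x : a_n(x)\to 0\} = A_\mu$, forcing $\widehat A_\mu = A_\mu$ and establishing (\ref{equation03}); measurability of $\widehat A_\mu$ then follows from that of $A_\mu$. The only delicate point is the strict-versus-nonstrict gap created by the limit in Lemma \ref{lemma1}(f): no single $\rho$ can separate $C_{n,\rho}$ from $\overline C_{n,\rho}$, but the countable intersection over $k$ absorbs this discrepancy, which is precisely why the statement is formulated with the family $\{1/k\}$.
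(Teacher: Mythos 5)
Your proposal is correct and follows essentially the same route as the paper: both reduce the definition of $\widehat A_{\mu}$ to the sets $D_{n,\rho}$, sandwich them between strict and non-strict versions of $C_{n,\rho}(\mu)$ via Lemma \ref{lemma1}(f), and let the countable intersection over $\rho=1/k$ absorb the strict-versus-nonstrict gap (the paper writes $D_{n,\rho}\subset C_{n,2\rho}$ where you write $D_{n,\rho}\subset\overline C_{n,\rho}$, a purely cosmetic difference). The measurability argument via openness of $C_{n,1/k}(\mu)$ is likewise identical.
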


\begin{proof}
\smartqed
 From equality (\ref{equation02}), we re-write the strong basin of statistical attraction of $\mu$ as follows:
\begin{equation}
\label{eqn07}
A_{\mu} = \Big \{x \in M: \ \ {\lim}^*_{n \rightarrow + \infty} \sigma_{n, x} = \mu \Big \} = \bigcap_{\rho >0} \bigcup_{N \in \mathbb{N}^+ }\bigcap_{n \geq N} C_{n, \rho}(\mu).\end{equation}

 From equality (\ref{equation01}) we have:
\begin{equation}
\label{eqn08}\widehat A_{\mu} =\bigcap_{\rho>0} \bigcup_{N \in \mathbb{N}^+} \bigcap_{n \geq N} D_{n, \rho}(\mu),\end{equation}
  where $D_{n, \rho}(\mu)$ is  defined by
$$D_{n, \rho}(\mu) : = \bigcup_{\epsilon_0>0} \bigcap_{0 <\epsilon \leq \epsilon_0} \{x \in M\colon \  \ \mbox{dist}^*(\sigma_{\epsilon, n, x}, \ \mu) < \rho\}.$$
 The assertion $\mbox{dist}^*(\sigma_{\epsilon, n, x}, \ \mu) < \rho $ for all $0 <\epsilon \leq \epsilon_0$  implies
 $$\lim_{\epsilon \rightarrow 0^+}\mbox{dist}^*(\sigma_{\epsilon, n, x}, \ \mu) \leq \rho < 2 \rho. $$
 Thus, applying part (f) of Lemma \ref{lemma1}, we deduce that $\mbox{dist}^* (\sigma_{n,x}, \mu) < 2 \rho$ for all $x \in D_{n, \rho}(\mu)$. In other words,
 $$ D_{n, \rho}(\mu)  \subset C_{n, 2 \rho}(\mu),$$
 which, joint with equalities (\ref{eqn07}) and (\ref{eqn08}), implies:
 $$\widehat A_{\mu} \subset A_{\mu}.$$

 To prove the converse inclusion, we apply again part (f) of Lemma \ref{lemma1} to write:
 $$C_{n, \rho}(\mu) = \{x \in X: \ \ \mbox{dist}^* ({\lim}^*_{\epsilon \rightarrow 0^+} \sigma_{\epsilon, n, x}, \mu) < \rho\}$$
 Therefore
 $$\lim_{\epsilon \rightarrow 0^+} \mbox{dist}^* (\sigma_{\epsilon, n, x}, \mu) < \rho\ \ \ \forall \ x \in C_{n, \rho}(\mu).$$
 Thus,
 $$C_{n, \rho} (\mu) \subset \bigcup_{\epsilon_0 >0} \bigcup_{0< \epsilon \leq \epsilon_0} \{ x \in M: \ \mbox{dist}^* (\sigma_{\epsilon, n, x}, \mu) < \rho\} =  D_{n, \rho}(\mu).$$
The above inclusion, joint with equalities (\ref{eqn07}) and (\ref{eqn08}), implies
$$A_{\mu} \subset \widehat A_{\mu}.$$

We have proved that
$$\widehat A_{\mu} = A_{\mu} = \bigcap _{\rho >0} \bigcup_{N \in {\mathbb{N}^+}} \bigcap_{n \geq N} C_{n, \rho}(\mu).$$
Since the   set $C_{n, \rho}(\mu)$  decreases when $\rho$ decreases (with $n$ and $\mu$ fixed),  the family
$$\Big \{ \bigcup_{N \in {\mathbb{N}^+}} \bigcap_{n \geq N} C_{n, \rho}(\mu).    \Big\}_{\rho >0},$$ whose intersection is $A_{\mu}$, is decreasing with $\rho$ decreases. Therefore, its intersection is equal to the intersection of its countable subfamily
$$\Big \{ \bigcup_{N \in {\mathbb{N}^+}} \bigcap_{n \geq N} C_{n, \ 1/k}(\mu).    \Big\}_{k \in \mathbb{N}^+}.$$
We have proved equality (\ref{eqn03}) of Lemma \ref{lemma3}.

Finally, note that the set $C_{n, \ 1/k}(\mu) \subset M$ is open, because $\sigma_{n,x} = (1/n) \sum_{j= 1}^n \delta_{f^j(x)}$ (with fixed $n$) depends continuously on $x$. Since equality (\ref{eqn03}) states that $\widehat A_{\mu} = A_{\mu}$ is the countable intersection of a countable union of a countable intersection of open sets, we conclude that it is a measurable set, ending the proof of Lemma \ref{lemma3}.
\qed
\end{proof}

\begin{lemma}
\label{lemma4} A probability measure $\mu$ is empirically stochastically stable, according to Definition \em \ref{definitionEmpiricStochasticStabilityMeasure}\em, if and only if its basin $\widehat A_{\mu}$ of empiric stability, defined by equality \em (\ref{equation01})\em, has positive Lebesgue measure.
\end{lemma}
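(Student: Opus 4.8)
The statement asserts the equivalence of two conditions on $\mu$: being empirically stochastically stable (Definition~\ref{definitionEmpiricStochasticStabilityMeasure}) and $m(\widehat A_{\mu}) > 0$. My plan is to prove the two implications separately. The tension between them is that the defining property of $\widehat A_{\mu}$ in (\ref{equation01}) lets the threshold $N$ and the noise bound $\epsilon_0$ depend on the point $x$, whereas stability demands a single $\epsilon_0$ that works simultaneously for Lebesgue-a.e.\ $x$ in a fixed positive-measure set.

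The implication ``stable $\Rightarrow m(\widehat A_{\mu}) > 0$'' is the routine one. If $\widehat A$ is the positive-measure set furnished by Definition~\ref{definitionEmpiricStochasticStabilityMeasure}, then for each $k \in \mathbb{N}^+$ and each $n$ beyond the ($x$-independent) threshold there is a Lebesgue-null set off which the defining inequality with $\rho = 1/k$ holds for all admissible $\epsilon$; taking the countable union over $k$ and $n$ of these null sets, I obtain a single null set $Z$ such that every $x \in \widehat A \setminus Z$ satisfies the pointwise requirement of (\ref{equation01}), hence lies in $\widehat A_{\mu}$. Thus $m(\widehat A_{\mu}) \geq m(\widehat A \setminus Z) = m(\widehat A) > 0$.

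For the converse ``$m(\widehat A_{\mu}) > 0 \Rightarrow$ stable'', I plan to exploit Lemma~\ref{lemma3}, which rewrites $\widehat A_{\mu} = A_{\mu} = \bigcap_k \bigcup_N \bigcap_{n \geq N} C_{n,\,1/k}(\mu)$ in terms of the zero-noise empiric measures alone. Writing $G^k_N := \bigcap_{n \geq N} C_{n,\,1/k}(\mu)$, these sets increase with $N$ to a set containing $\widehat A_{\mu}$, so $m(\widehat A_{\mu} \setminus G^k_N) \to 0$ as $N \to \infty$. An Egorov-type selection then gives, for each $k$, an index $N_k$ with $m(\widehat A_{\mu} \setminus G^k_{N_k}) < 2^{-k-1} m(\widehat A_{\mu})$; intersecting, the set $\widehat A := \widehat A_{\mu} \cap \bigcap_k G^k_{N_k}$ still has $m(\widehat A) \geq m(\widehat A_{\mu})/2 > 0$, and on it the zero-noise convergence is uniform in $x$: for every $k$ and every $n \geq N_k$ one has $\mbox{dist}^*(\sigma_{n,x}, \mu) < 1/k$ for all $x \in \widehat A$. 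It then remains to pass to the noisy statistics via part (f) of Lemma~\ref{lemma1}: given $\rho > 0$, choose $k$ with $1/k < \rho/2$; for every $n \geq N_k$ the uniform-in-$x$ convergence $\sigma_{\epsilon,n,x} \to \sigma_{n,x}$ supplies an $\epsilon_0 = \epsilon_0(\rho,n)$, independent of $x$, with $\mbox{dist}^*(\sigma_{\epsilon,n,x}, \sigma_{n,x}) < \rho/2$ for all $0 < \epsilon \leq \epsilon_0$, and the triangle inequality gives $\mbox{dist}^*(\sigma_{\epsilon,n,x}, \mu) < \rho$ for all such $\epsilon$ and all $x \in \widehat A$, which is exactly the condition of Definition~\ref{definitionEmpiricStochasticStabilityMeasure} with threshold $N_k$ depending only on $\rho$.

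I expect the main obstacle to be precisely this uniformization in the converse direction: turning the pointwise, $x$-dependent control of (\ref{equation01}) into control uniform over a positive-measure set. The two ingredients that make it work are Lemma~\ref{lemma3}, which recasts $\widehat A_{\mu}$ through the continuous-in-$x$, $\rho$-monotone sets $C_{n,\,1/k}(\mu)$ so that an Egorov exhaustion applies, and the uniformity in $x$ of Lemma~\ref{lemma1}(f), which is what allows the single noise bound $\epsilon_0$ to be chosen independently of the point. Care will be needed to arrange the Egorov selection and the countable unions of null sets \emph{before} fixing $\rho$ and $n$, so that the quantifier order demanded by Definition~\ref{definitionEmpiricStochasticStabilityMeasure} is respected.
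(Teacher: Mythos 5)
Your proposal is correct and follows essentially the same route as the paper: the forward implication is the immediate inclusion of (almost all of) $\widehat A$ into $\widehat A_{\mu}$, and the converse uses Lemma~\ref{lemma3} to rewrite $\widehat A_{\mu}$ via the sets $C_{n,1/k}(\mu)$, an Egorov-type exhaustion selecting $N(k)$ so that the intersection $\widehat A$ retains at least half the measure of $\widehat A_{\mu}$, and then Lemma~\ref{lemma1}(f) plus the triangle inequality to pass from $\sigma_{n,x}$ to $\sigma_{\epsilon,n,x}$ uniformly in $x$. The only (harmless) differences are the choice of $2^{-k-1}m(\widehat A_{\mu})$ in place of the paper's $\alpha/3^{k}$ and your slightly more careful treatment of the ``Lebesgue-a.e.'' qualifier in the easy direction.
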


\begin{proof}
\smartqed
If $\mu$ is empirically stochastically stable, then from Definition \ref{definitionEmpiricStochasticStabilityMeasure}, there exists a Lebesgue-positive set $\widehat A \subset M$ such that $\widehat A \subset \widehat A_{\mu}$. Hence $m(\widehat A_{\mu}) >0$.

To prove the converse assertion, assume that  $m(\widehat A_{\mu}) = \alpha >0.$
Let us construct a positive Lebesgue set $\widehat A \subset \widehat A_{\mu}$ such that for any $\rho>0$, there exists $N \in \mathbb{N}^+$ (uniform on $x \in \widehat A$), such that for all $n \geq N$ there exists $\epsilon_0 >0$ (uniform on $x \in \widehat A$) satisfying
\begin{equation}
\label{eqntobeproved}
\mbox{dist}^*(\sigma_{\epsilon, n, x}, \mu) < \rho \ \ \ \forall \ 0<\epsilon \leq \epsilon_0, \ \ \forall \ x \in \widehat A \ \mbox{ (to be proved)}.\end{equation}

Applying Lemma \ref{lemma3} we have
 $$\widehat A_{\mu} = \bigcap_{k \in \mathbb{N}^+} \bigcup_{N \in \mathbb{N}^+} E_{N, 1/ k}, \ \ \mbox{
where } \ \ E_{N,1/k} := \bigcap_{n \geq N} C_{n, 1/k}(\mu). $$
For fixed $k \in \mathbb{N}^+$ we have   $E_{N+1, 1/ k}\subset E_{N, 1/k}$ for all $N \geq 1$, and
$$\widehat A_{\mu}  =  \bigcup_{N \in \mathbb{N}^+} ( E_{N, 1/ k} \cap \widehat A_{\mu}). \ \ \ \mbox{ Then } \ \
\lim_{N \rightarrow + \infty} m(E_{N, 1/ k} \cap \widehat  A_{\mu}) =  m (\widehat A_{\mu}) = \alpha.$$
Therefore, for each $k \geq 1$ there exists $N(k) \geq 1$ such that  $$\alpha (1- 1/3^k) \leq m(E_{N(k), 1/k}\cap \widehat A_{\mu})  \leq \alpha .$$
We construct
$$\widehat A := \bigcap_{k \in {\mathbb{N}^+}}(E_{N(k), 1/k}\cap \widehat A_{\mu}). $$
We will prove that $\widehat A$ has positive Lebesgue measure and that assertion (\ref{eqntobeproved}) is satisfied uniformly for all $x \in \widehat A$.
First, $$m( \widehat A_{\mu} \setminus \widehat A) =  m (\bigcup_{k\geq1} ( \widehat A_{\mu} \setminus E_{N(k), 1/k} ) \leq \sum_{k= 1}^{+ \infty} (\alpha -m( E_{N(k), 1/k} \cap \widehat A_{\mu})) \leq \sum_{k= 1}^{+ \infty}  \frac{\alpha}{3^k} = \frac{\alpha}{2}, $$
 from where $$ m( \widehat A) = m (\widehat A_{\mu} ) - m(A_{\mu} \setminus \widehat A) \geq \alpha - \frac{\alpha}{2} = \frac{\alpha}{2} >0. $$
 Second,  for all $\rho>0$, there exists a natural number $k \geq 2/\rho$, and a set

 \noindent $B_{N(k), 1/k}     \supset \widehat A$ such that
 $$x \in C_{n, 1/k}(\mu) \ \ \ \forall \ n \geq N(k), \ \ \ \ \forall \ \ \ x \in B_{N(k), 1/k}.  $$
 Therefore,  for all $n \geq N(k)$ (which is independent on $x$) we obtain:
 \begin{equation} \label{eqn09a}\mbox{dist}^*(\sigma_{n, x}, \ \mu) < \frac{1}{k}  \leq \frac{\rho}{2} \ \ \ \ \forall \ x \in \widehat A. \end{equation}
 Finally, applying part (f) of Lemma \ref{lemma1},  for each fixed $n \geq N(k)$ there exists $\epsilon_0>0$ (independent of $x$), such that
 \begin{equation} \label{eqn09b}\mbox{dist}^*(\sigma_{\epsilon, n, x},  \ \sigma_{n, x}) <  \frac{\rho}{2}   \ \ \ \ \forall \ 0 < \epsilon \leq \epsilon_0, \ \ \forall \ x \in \widehat M. \end{equation}
  Inequalities (\ref{eqn09a}) and (\ref{eqn09b})  end the proof of inequality (\ref{eqntobeproved}); hence Lemma \ref{lemma4} is proved.
\qed
\end{proof}

\noindent {\bf End of the proof of Theorem \ref{theoremEmpiricStochasticStableMeasure=Physical}.}

\begin{proof}
\smartqed
From Lemma \ref{lemma4}, $\mu$ is empirically stochastically stable if and only if $m(\widehat A_{\mu}) >0$. From Definition \ref{definitionPhysicalMeasures}, $\mu$ is physical if and only if $m(A_{\mu}) >0$. Applying Lemma \ref{lemma3} we have $\widehat A_{\mu} = A_{\mu}$. We conclude that $\mu$ is empirically stochastically stable if and only if $\mu$ is physical.
\qed
\end{proof}

Before proving Corollary \ref{corollaryGlobalStabilityUniquePhysicalMeasure}, we recall the following theorem taken from \cite{Polaca}:

\begin{theorem}
\label{TheoremCE}

Let $f: M \mapsto M$ be a continuous map on a compact Riemannian manifold $M$.
Then, the set ${\mathcal O}_f$ of pseudo-physical measures for $f$ is nonempty and weak$^*$-compact, and contains $p \omega _x$ for Lebesgue-a.e. $x \in M$.

Moreover,  ${\mathcal O}_f$ is the  minimal nonempty weak$^*$-compact set of probability measures that contains $p \omega_x$ for Lebesgue-a.e. $x \in M$.

\end{theorem}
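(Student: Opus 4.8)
The plan is to prove the three assertions in the order: weak$^*$-compactness of $\mathcal{O}_f$, the inclusion $p\omega_x\subset\mathcal{O}_f$ for Lebesgue-a.e. $x$ (which also yields nonemptiness), and finally minimality. Throughout I will use that $\mathcal{M}$ is weak$^*$-compact and metrizable, hence second countable; that $p\omega_x$ is a nonempty subset of $\mathcal{M}_f$ for every $x\in M$; and the reading of the defining distance as the distance between the compact set $p\omega_x$ and the singleton $\{\mu\}$, namely $\mbox{dist}^*(p\omega_x,\{\mu\})=\inf_{\nu\in p\omega_x}\mbox{dist}^*(\nu,\mu)$, so that, writing $B^*(\mu,r)$ for the open weak$^*$-ball, one has $A^\rho_\mu=\{x\in M:\ p\omega_x\cap B^*(\mu,\rho)\neq\emptyset\}$. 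By Definition \ref{definitionPhysicalMeasures}, $\mu\in\mathcal{O}_f$ iff $m(A^\rho_\mu)>0$ for every $\rho>0$, while $\mu\notin\mathcal{O}_f$ iff $m(A^{\rho_\mu}_\mu)=0$ for some $\rho_\mu>0$.

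To see that $\mathcal{O}_f$ is weak$^*$-closed (hence compact, being closed in the compact $\mathcal{M}$), I would take $\mu_i\to\mu$ with $\mu_i\in\mathcal{O}_f$, fix $\rho>0$, and choose $i$ with $\mbox{dist}^*(\mu_i,\mu)<\rho/2$. A one-line triangle-inequality argument gives $A^{\rho/2}_{\mu_i}\subset A^\rho_\mu$; since the left-hand side has positive Lebesgue measure, so does $A^\rho_\mu$, and as $\rho$ is arbitrary, $\mu\in\mathcal{O}_f$.

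The core of the argument is the a.e.\ inclusion. I would work with the complement $N:=\mathcal{M}\setminus\mathcal{O}_f$, which is weak$^*$-open. For each $\nu\in N$ pick $\rho_\nu>0$ with $m(A^{\rho_\nu}_\nu)=0$, and note that the open balls $\{B^*(\nu,\rho_\nu)\}_{\nu\in N}$ cover $N$. The key identity is $\{x:\ p\omega_x\cap B^*(\nu,\rho_\nu)\neq\emptyset\}=A^{\rho_\nu}_\nu$, a null set. Since $\mathcal{M}$ is second countable, this cover admits a countable subcover $\{B^*(\nu_i,\rho_{\nu_i})\}_{i\in\mathbb{N}}$, whence
\[
\{x\in M:\ p\omega_x\cap N\neq\emptyset\}\ \subset\ \bigcup_{i\in\mathbb{N}}\{x:\ p\omega_x\cap B^*(\nu_i,\rho_{\nu_i})\neq\emptyset\}\ =\ \bigcup_{i\in\mathbb{N}}A^{\rho_{\nu_i}}_{\nu_i}
\]
is Lebesgue-null. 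Therefore $p\omega_x\subset\mathcal{O}_f$ for a.e.\ $x$, and since each $p\omega_x$ is nonempty, $\mathcal{O}_f\neq\emptyset$. I expect the only genuinely delicate point to be precisely this passage to a countable subcover: an uncountable union of the null basins $A^{\rho_\nu}_\nu$ need not be null, so the second countability of $\mathcal{M}$ is exactly what rescues the argument.

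For minimality, let $\mathcal{K}$ be a nonempty weak$^*$-compact set with $p\omega_x\subset\mathcal{K}$ for a.e.\ $x$, and suppose for contradiction that some $\mu\in\mathcal{O}_f$ satisfies $\mu\notin\mathcal{K}$. Compactness of $\mathcal{K}$ gives $\rho_0:=\mbox{dist}^*(\mu,\mathcal{K})>0$, and pseudo-physicality gives $m(A^{\rho_0/2}_\mu)>0$. For each $x\in A^{\rho_0/2}_\mu$ there is $\nu\in p\omega_x$ with $\mbox{dist}^*(\nu,\mu)<\rho_0/2$, whence $\mbox{dist}^*(\nu,\mathcal{K})\geq\rho_0-\rho_0/2>0$ and so $\nu\notin\mathcal{K}$; thus $p\omega_x\not\subset\mathcal{K}$ on a set of positive Lebesgue measure, contradicting the hypothesis. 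Hence $\mathcal{O}_f\subset\mathcal{K}$, which together with the previous paragraph shows that $\mathcal{O}_f$ is the minimal nonempty weak$^*$-compact set containing $p\omega_x$ for a.e.\ $x$, completing the proof.
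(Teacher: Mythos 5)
Your proof is correct, but note that the paper itself does not prove this statement: Theorem \ref{TheoremCE} is imported from \cite{Polaca} (Theorem 1.5 there), and the ``proof'' in the text is only that citation. What you have written is a complete, self-contained argument, and it is in substance the standard argument of the cited reference: closedness of ${\mathcal O}_f$ via the triangle-inequality inclusion $A^{\rho/2}_{\mu_i}\subset A^{\rho}_{\mu}$; the a.e.\ inclusion $p\omega_x\subset{\mathcal O}_f$ by covering the weak$^*$-open complement ${\mathcal M}\setminus{\mathcal O}_f$ with balls whose weak basins are Lebesgue-null and extracting a countable subcover from second countability of ${\mathcal M}$ (you are right that this Lindel\"of step is the crux, since an uncountable union of null sets is otherwise uncontrolled); and minimality by separating a putative $\mu\in{\mathcal O}_f\setminus{\mathcal K}$ from the compact ${\mathcal K}$ at positive distance $\rho_0$ and observing that the positive-measure set $A^{\rho_0/2}_{\mu}$ then violates $p\omega_x\subset{\mathcal K}$. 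The only point you leave implicit is the measurability of the weak basins $A^{\rho}_{\mu}$, which is needed for the statement ``$m(A^{\rho_\nu}_{\nu})=0$'' to make sense; it follows from the identity $\mbox{dist}^*(p\omega_x,\{\mu\})=\liminf_{n\to\infty}\mbox{dist}^*(\sigma_{n,x},\mu)$ (valid because ${\mathcal M}$ is weak$^*$-compact, so the infimum over the limit set is attained along a subsequence) together with the continuity of $x\mapsto\sigma_{n,x}$ for each fixed $n$ --- an identification the paper itself uses in (\ref{eqn15}). With that remark added, the proof is complete.
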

\begin{proof}
See  \cite[Theorem 1.5]{Polaca}.
\end{proof}

\noindent{\bf  Proof of Corollary \ref{corollaryGlobalStabilityUniquePhysicalMeasure}.}

\begin{proof}
\smartqed
(i) implies (ii):  If $\mu_1$ is globally empirically stable, then by Definition \ref{definitionGloballyEmpStochStabilityMeasure} $m(\widehat A_{\mu_1}) = m(M)$. Applying Theorem \ref{theoremEmpiricStochasticStableMeasure=Physical}, $\mu_1$ is physical. Besides, from Lemma \ref{lemma3}, we know $\widehat A_{\mu_1} = A_{\mu_1}$. Then $m(A_{\mu_1}) = m (M)$. So, there exists $\mu_2 = \mu_1$ that is physical and whose strong basin of statistical attraction has full Lebesgue measure, as wanted.

(ii) implies (iii):  If $\mu_2$ is physical and $m(A_{\mu_2})=m(M)$, then from Definitions  \ref{definitionEmpiricProbab&pomega} and \ref{definitionBasinStatisticalAttraction}, we deduce that the set $\{\mu_2\}$ contains $p\omega_x$ for Lebesgue-a.e. $x \in M$. Besides $\{\mu_2\}$ is nonempty and weak$^*$-compact. Hence, applying the last assertion of Theorem \ref{TheoremCE}, we deduce that $\{\mu_2\}$ is the whole set ${\mathcal O}_f$ of pseudo physical measures for $f$. In other words, there exists a unique measure $\mu_3 = \mu_2$ that is pseudo-physical, as wanted.

(iii) implies (i):  If there exists a unique measure $\mu_3$ that is pseudo-physical for $f$, then, applying Theorem \ref{TheoremCE} we know that that the set $\{\mu_3\}$ contains $p\omega_x$ for Lebesgue-a.e. $x \in M$. From Definitions \ref{definitionEmpiricProbab&pomega} and \ref{definitionBasinStatisticalAttraction}, we deduce that the strong basin $A_{\mu_3}$ of statistical attraction of $\mu_3$ has full Lebesgue measure. Then, $\mu_3$ is physical, and applying Theorem \ref{theoremEmpiricStochasticStableMeasure=Physical} $\mu_3$ is empirically stochastically stable. Besides, from   Lemma \ref{lemma3}, we obtain that the basin $\widehat A_{\mu_3}$ of empiric stochastic stability of $\mu_3$ coincides with $A_{\mu_3}$; hence it has full Lebesgue measure. From Definition \ref{definitionGloballyEmpStochStabilityMeasure} we conclude that there exists a measure $\mu_1 = \mu_3$ that is globally empirically stochastically stable, as wanted.

We have proved that (i), (ii) and (iii) are equivalent conditions. Besides, we have proved that if these conditions holds, the three measures $\mu_1$, $\mu_2 $ and $\mu_3$ coincide. This ends the proof of Corollary \ref{corollaryGlobalStabilityUniquePhysicalMeasure}.
\qed
\end{proof}

\noindent{\bf  Proof of Corollary \ref{corollaryC1&C2ExpandingMapsInTheCircle}.}

\begin{proof}
\smartqed
On the one hand, a classical theorem by Ruelle states that any $C^2$ expanding map $f$ of the circle $S^1$ has a unique  invariant measure $\mu$ that is ergodic and absolutely continuous with respect to the Lebesgue measure. Thus, from Pesin's Theory \cite{PesinTheory1}, \cite{PesinTheory2}, it is the unique invariant measure that satisfies Pesin Entropy Formula
(\ref{eqnPesinEntropyFormula}).

On the other hand, Qiu \cite{Qiu} has proved that $C^1$ generic transitive and hyperbolic maps have a unique invariant measure $\mu$ that satisfies Pesin Entropy Formula, (nevertheless,     $C^1$-generically $\mu$ is mutually singular with the Lebesgue measure \cite{Avila-Bochi}).  We deduce that, in particular $C^1$ generic expanding maps of $S^1$ have a unique measure $\mu$ satisfying equality (\ref{eqnPesinEntropyFormula}).

Applying the above known results, to prove this corollary we will first prove that for any $C^1$ expanding map $f$, if it exhibits a unique invariant measure $\mu$ that satisfies (\ref{eqnPesinEntropyFormula}), then $\mu$ is the unique empirically stochastically stable measure.
In fact, in \cite{Portugalia} it is proved that any pseudo-physical measure of any $C^1$ expanding map of $S^1$ satisfies Pesin Entropy Formula  (\ref{eqnPesinEntropyFormula}). Hence, we deduce that, for our map $f$, $\mu$ is the unique pseudo-physical measure.  Besides in \cite{Polaca}, it is proved that if the set of pseudo-physical or SRB-like measures is finite, then all the pseudo-physical measures are physical. We deduce that our map $f$ has a unique physical measure $\mu$. Applying Theorem \ref{theoremEmpiricStochasticStableMeasure=Physical}, $\mu$ is the unique empirically stochastically stable measure, as wanted.

Now, to end the proof of this corollary, let us show that the measure $\mu$ that was considered above,  is globally empirically stochastically stable. From Theorem \ref{TheoremCE}, the set ${\mathcal O}_f$ of all the pseudo-physical measures is the minimal weak$^*$-compact set of invariant measures such that $p\omega(x) \subset {\mathcal O}_f$ for Lebesgue-a.e. $x \in S^1$. But, in our case, we have  ${\mathcal O}_f = \{\mu\}$; hence $p\omega(x) = \{\mu\}$ for Lebesgue-a.e. $x \in S^1$. Applying Definition \ref{definitionBasinStatisticalAttraction}, we conclude that the strong basin of statistical attraction $A_{\mu}$ has full Lebesgue measure; and so, by Theorem \ref{theoremEmpiricStochasticStableMeasure=Physical} the basin $\widehat A_{\mu}$ of empirically stochastic stability of $\mu$ covers Lebesgue-a.e. the space; hence $\mu$ is globally empirically stochastically stable.
\qed
\end{proof}

\section{Proof of Theorem \ref{theoremEmpricStochasticStableSetsSubsetPseudoPhysical} and its corollaries.}

\label{sectionProofTheorem2}

For any nonempty weak$^*$-compact set ${\mathcal K}  $ of $f$-invariant measures, recall Definition \ref{definitionBasinWidehatA_K} of the (maybe empty) basin $\widehat A_{\mathcal K} \subset M$ of empiric stochastic stability of ${\mathcal K}$ constructed by equality (\ref{equation11}).

Similarly to Definition \ref{definitionBasinStatisticalAttraction}, in which the strong basin $A_{\mu}$ of statistical attraction of a single measure $\mu$ is constructed, we   define now \em the (maybe empty) strong basin of statistical attraction $A_{\mathcal K} \subset M$ of the set ${\mathcal K} \subset {\mathcal M}$, \em as follows:
\begin{equation}
\label{equation12}
A_{\mathcal K} := \{x \in M, \ \ p \omega_x \subset {\mathcal K}\},
\end{equation}
where  $p \omega_x$  is the   $p$-omega limit set (limit set in the space ${\mathcal M}$ of probabilities) for the empiric probabilities along the orbit with initial state in $x \in M$ (recall Definition \ref{definitionEmpiricProbab&pomega}).

 We will prove the following property of the basins $\widehat A_{\mathcal K}  $ and $A_{\mathcal K}$:

 \begin{lemma}
 \label{lemma3bis}

 For any nonempty weak$^*$-compact set ${\mathcal K}$ in the space ${\mathcal M}$ of probability measures, the basins
 $\widehat A_{\mathcal K} \subset M$ and $A_{\mathcal K} \subset M$, defined by equalities \em (\ref{equation11}) \em and \em (\ref{equation12}) \em respectively, are measurable sets and coincide. Moreover
 $$\widehat A_{\mathcal K} = \widehat A_{\mu} = \bigcap _{k \in {\mathbb{N}^+}} \bigcup_{N \in {\mathbb{N}^+}} \bigcap_{n \geq N} C_{n, 1/k}({\mathcal K}), $$
 where, for all $\rho >0$ the set $C_{n, \rho}({\mathcal K}) \subset M$ is defined by
 $$ C_{n, \rho}({\mathcal K})  = \{x \in M \colon \ \  \mbox{dist}^*(\sigma_{n, x}, \ {\mathcal K}) < \rho\}.$$
 \end{lemma}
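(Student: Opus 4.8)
The plan is to follow the proof of Lemma~\ref{lemma3} almost verbatim, replacing the single measure $\mu$ by the weak$^*$-compact set $\mathcal K$ and the point-distance $\mbox{dist}^*(\cdot, \mu)$ by the set-distance $\mbox{dist}^*(\cdot, \mathcal K) := \inf_{\nu \in \mathcal K} \mbox{dist}^*(\cdot, \nu)$. The one genuinely new ingredient is a reformulation of the condition $p\omega_x \subset \mathcal K$ that defines $A_{\mathcal K}$. First I would prove that, for every $x \in M$,
$$p\omega_x \subset \mathcal K \ \Longleftrightarrow \ \lim_{n \to +\infty} \mbox{dist}^*(\sigma_{n,x}, \mathcal K) = 0.$$
The converse implication is immediate, since any limit point $\mu$ of $\sigma_{n,x}$ then satisfies $\mbox{dist}^*(\mu, \mathcal K) = 0$ and $\mathcal K$ is weak$^*$-closed, so $\mu \in \mathcal K$. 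For the direct implication I would argue by contradiction: if the distances did not tend to $0$, some subsequence $\sigma_{n_i, x}$ would stay at distance $\geq \delta > 0$ from $\mathcal K$; using the weak$^*$-compactness of the ambient space $\mathcal M$, I would extract a convergent sub-subsequence whose limit belongs to $p\omega_x \subset \mathcal K$ yet sits at distance $\geq \delta$ from $\mathcal K$, a contradiction. This yields $A_{\mathcal K} = \bigcap_{\rho > 0} \bigcup_{N} \bigcap_{n \geq N} C_{n, \rho}(\mathcal K)$, the exact analogue of equality~(\ref{eqn07}).

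Next, exactly as for equality~(\ref{eqn08}), I would write $\widehat A_{\mathcal K} = \bigcap_{\rho > 0} \bigcup_{N} \bigcap_{n \geq N} D_{n, \rho}(\mathcal K)$ with $D_{n, \rho}(\mathcal K) := \bigcup_{\epsilon_0 > 0} \bigcap_{0 < \epsilon \leq \epsilon_0} \{x \in M : \mbox{dist}^*(\sigma_{\epsilon, n, x}, \mathcal K) < \rho\}$, and then sandwich the two families using part~(f) of Lemma~\ref{lemma1}. Since $\sigma_{\epsilon, n, x} \to \sigma_{n, x}$ uniformly as $\epsilon \to 0^+$ and the set-distance $\nu \mapsto \mbox{dist}^*(\nu, \mathcal K)$ is $1$-Lipschitz (hence continuous), membership $x \in D_{n,\rho}(\mathcal K)$ forces $\mbox{dist}^*(\sigma_{n,x}, \mathcal K) \leq \rho$, giving $D_{n,\rho}(\mathcal K) \subset C_{n, 2\rho}(\mathcal K)$; conversely $\mbox{dist}^*(\sigma_{n,x}, \mathcal K) < \rho$ together with the same uniform convergence produces an $\epsilon_0 > 0$ valid for that fixed $n$, giving $C_{n,\rho}(\mathcal K) \subset D_{n,\rho}(\mathcal K)$. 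Feeding these inclusions into the triple-indexed intersection--union--intersection yields $\widehat A_{\mathcal K} = A_{\mathcal K}$.

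Finally, I would collapse the intersection over all $\rho > 0$ to the countable one over $\rho = 1/k$, $k \in \mathbb{N}^+$, using that $C_{n, \rho}(\mathcal K)$ shrinks as $\rho$ decreases (with $n$ and $\mathcal K$ fixed), so the family is monotone and its intersection equals that of any cofinal countable subfamily. Measurability is then automatic: each $C_{n, 1/k}(\mathcal K)$ is open, because $\sigma_{n,x} = (1/n)\sum_{j=1}^n \delta_{f^j(x)}$ depends continuously on $x$ and the set-distance to the fixed $\mathcal K$ is continuous, so $x \mapsto \mbox{dist}^*(\sigma_{n,x}, \mathcal K)$ is continuous and its strict sublevel set is open; hence $\widehat A_{\mathcal K} = A_{\mathcal K}$ is a countable intersection of countable unions of countable intersections of open sets, hence Borel. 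The main point needing care beyond Lemma~\ref{lemma3} is precisely the compactness argument of the first paragraph --- translating the topological condition $p\omega_x \subset \mathcal K$ into the quantitative statement $\mbox{dist}^*(\sigma_{n,x}, \mathcal K) \to 0$ --- since everything else is a routine transcription once $\mbox{dist}^*(\cdot,\mu)$ is replaced by $\mbox{dist}^*(\cdot, \mathcal K)$.
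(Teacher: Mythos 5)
Your proposal is correct and follows essentially the same route as the paper, whose entire proof of this lemma is the instruction to repeat the proof of Lemma~\ref{lemma3} with $\mathcal K$ in place of $\mu$ and equalities (\ref{equation11}), (\ref{equation12}) in place of (\ref{equation01}), (\ref{equation02}). In fact you are more careful than the paper: the compactness argument establishing $p\omega_x \subset \mathcal K \Leftrightarrow \mbox{dist}^*(\sigma_{n,x},\mathcal K)\to 0$, and the $1$-Lipschitz continuity of the set-distance, are exactly the points that make the transcription legitimate, and the paper leaves them implicit.
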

\begin{proof}
\smartqed
Repeat the proof of Lemma \ref{lemma3}, with the set ${\mathcal K}$ instead of the single measure $\mu$, and using equalities (\ref{equation11}) and (\ref{equation12}), instead of (\ref{equation01}) and (\ref{equation02}) respectively.
\qed
\end{proof}

\begin{lemma}
\label{lemma4bis}
The set ${\mathcal O}_f$ of all pseudo-physical measures is globally empirically stochastically stable.
\end{lemma}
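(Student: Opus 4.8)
The plan is to verify directly that $\mathcal{O}_f$ satisfies Definition \ref{definitionGloballyEmpStochStabilitySet}, namely that it is empirically stochastically stable in the sense of Definition \ref{definitionEmpiricStochasticStabilitySetK} and that its basin $\widehat A_{\mathcal{O}_f}$ has full Lebesgue measure. The first move is to identify the basin. By Lemma \ref{lemma3bis} we have $\widehat A_{\mathcal{O}_f} = A_{\mathcal{O}_f} = \{x \in M : p\omega_x \subset \mathcal{O}_f\}$, and by Theorem \ref{TheoremCE} the inclusion $p\omega_x \subset \mathcal{O}_f$ holds for Lebesgue-a.e. $x \in M$. Hence $m(\widehat A_{\mathcal{O}_f}) = m(M)$, which will supply the \emph{global} clause at the end; it also gives $m(\widehat A_{\mathcal{O}_f}) > 0$, the starting point for condition a).

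Next I would verify condition a) of Definition \ref{definitionEmpiricStochasticStabilitySetK}. This does not follow formally from $m(\widehat A_{\mathcal{O}_f})>0$, because inside the set-builder (\ref{equation11}) the thresholds $N$ and $\epsilon_0$ are allowed to depend on $x$, whereas condition a) demands an $\epsilon_0$ (and an $N$) uniform over a positive-measure set $\widehat A$. The remedy is to repeat the argument of Lemma \ref{lemma4} verbatim, now with $\mathcal{K} = \mathcal{O}_f$ in place of the single measure $\mu$ and with $C_{n,1/k}(\mathcal{O}_f)$ in place of $C_{n,1/k}(\mu)$: using the decomposition of $\widehat A_{\mathcal{O}_f}$ from Lemma \ref{lemma3bis} together with a measure-exhaustion over the nested sets $E_{N,1/k}$, one extracts a positive-measure subset $\widehat A \subset \widehat A_{\mathcal{O}_f}$ on which, for every $\rho>0$ and every large $n$, a single $\epsilon_0>0$ works simultaneously, the last step invoking part (f) of Lemma \ref{lemma1}. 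Since that argument used only the structure of the basin and the uniform convergence of Lemma \ref{lemma1}(f), it transfers with no change.

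The step requiring genuine input is condition b), the minimality of $\mathcal{O}_f$. Suppose $\mathcal{K}' \subset \mathcal{M}_f$ is nonempty and weak$^*$-compact with $\widehat A_{\mathcal{O}_f} \subset \widehat A_{\mathcal{K}'}$ Lebesgue-a.e. Applying Lemma \ref{lemma3bis} to $\mathcal{K}'$ gives $\widehat A_{\mathcal{K}'} = A_{\mathcal{K}'} = \{x : p\omega_x \subset \mathcal{K}'\}$, and since $\widehat A_{\mathcal{O}_f}$ already has full measure the hypothesis forces $m(A_{\mathcal{K}'}) = m(M)$; that is, $p\omega_x \subset \mathcal{K}'$ for Lebesgue-a.e. $x$. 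Now the minimality clause of Theorem \ref{TheoremCE}, which characterizes $\mathcal{O}_f$ as the smallest nonempty weak$^*$-compact set containing $p\omega_x$ for a.e. $x$, yields $\mathcal{O}_f \subset \mathcal{K}'$, exactly as required.

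I expect condition b) to be the main point, since it is the only place where the defining minimality of $\mathcal{O}_f$ is genuinely used; conditions a) and the full-measure computation are essentially bookkeeping built on the already-established Lemmas \ref{lemma3bis}, \ref{lemma4} and \ref{lemma1}. Combining the three ingredients, condition a), condition b), and $m(\widehat A_{\mathcal{O}_f}) = m(M)$, completes the proof that $\mathcal{O}_f$ is globally empirically stochastically stable.
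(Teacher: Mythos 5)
Your proposal is correct, and its overall architecture (full measure of $\widehat A_{{\mathcal O}_f}$ via Theorem \ref{TheoremCE} and Lemma \ref{lemma3bis}; condition a) by rerunning the exhaustion argument of Lemma \ref{lemma4} with $C_{n,1/k}({\mathcal O}_f)$ in place of $C_{n,1/k}(\mu)$) matches the paper's proof. The one place where you diverge is condition b). You derive $p\omega_x\subset{\mathcal K}'$ for Lebesgue-a.e.\ $x$ from the hypothesis $\widehat A_{{\mathcal O}_f}\subset\widehat A_{{\mathcal K}'}$ and then invoke the minimality clause of Theorem \ref{TheoremCE} directly to conclude ${\mathcal O}_f\subset{\mathcal K}'$. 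The paper instead argues by contradiction: it supposes $\nu\in{\mathcal O}_f\setminus{\mathcal K}'$, picks $\rho<\frac{1}{2}\mbox{dist}^*(\nu,{\mathcal K}')$, and uses the positive Lebesgue measure of the $\rho$-weak basin $A^{\rho}_{\nu}$ (i.e.\ the definition of pseudo-physical) to show $m(A_{{\mathcal K}'})<m(M)$, contradicting $m(A_{{\mathcal K}'})=m(M)$. Your route is shorter and delegates the work to the ``Moreover'' part of Theorem \ref{TheoremCE} — a usage the paper itself makes in the proof of Theorem \ref{theoremEmpricStochasticStableSetsSubsetPseudoPhysical}(b) and Corollary \ref{corollaryGlobalStabilityUniquePhysicalMeasure} — while the paper's version only needs the first assertion of that theorem plus the raw definition of pseudo-physicality, so it is marginally more self-contained. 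Both are valid; nothing is missing from your argument.
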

\begin{proof}
\smartqed
From Theorem \ref{TheoremCE}, $p\omega_x \subset {\mathcal O}_f$ for Lebesgue-a.e. $x \in M$. Thus, the strong basin of statistical attraction $ A_{{\mathcal O}_f}$ of ${\mathcal O}_f$, defined by equality (\ref{equation12}), has full Lebesegue measure. After Lemma \ref{lemma3bis}, the basin $\widehat A_{{\mathcal O}_f}$ of empiric stochastic stability of  ${\mathcal O}_f$, has full Lebesgue measure. Therefore, if we prove that ${\mathcal O}_f$ is empirically stochastically stable, it must be  globally so.

We now repeat the proof of Lemma \ref{lemma4}, using ${\mathcal O}_f$ instead of a single measure $\mu$, to construct a Lebesgue-positive set $\widehat A \subset M$ such that, for all $\rho >0$ and for all $n$ large enough, there exists $\epsilon_0>0$ (independenly of $x \in \widehat A$) such that
$$\mbox{dist}^* (\sigma_{\epsilon, n, x},  \ {\mathcal O}_f) < \rho \ \ \ \forall \  0 <\epsilon \leq \epsilon_0, \ \ \forall \ x \in \widehat A.$$

Thus, ${\mathcal O}_f $ satisfies condition (a) of Definition \ref{definitionEmpiricStochasticStabilitySetK}, to be empirically stochastically stable. Let us prove that ${\mathcal O}_f $  also satisfies condition (b):

Assume that ${\mathcal K} \subset {\mathcal M}_f$ is nonempty and weak$^*$-compact and ${\widehat A}_{{\mathcal O}_f }\subset {\widehat A}_{\mathcal K}$ Lebesgue-a.e. We shall prove that ${\mathcal O}_f \subset {\mathcal K}$. Arguing by contradiction, assume that there exists a probability measure $\nu \in {\mathcal O}_f \setminus {\mathcal K}$. Choose \begin{equation}
 \label{eqn14}  0 <\rho < \frac{\mbox{dist}^* (\nu, \ {\mathcal K})}{2}
\end{equation}

On the one hand, since $\nu$ is pseudo-physical, applying Definitions \ref{definitionBasinStatisticalAttraction} and \ref{definitionPhysicalMeasures},   the $\rho$-weak basin $A^{\rho}_{\nu}$ of statistical attraction of $\nu$ has positive Lebesgue measure. In brief:
\begin{equation}
 \label{eqn15}
m (\{x \in M: \liminf_{n \rightarrow + \infty} \mbox{dist}^*(\sigma_{n,x}, \ \nu) < \rho\}) >0.  \end{equation} From inequalities (\ref{eqn14}) and (\ref{eqn15}), and applying equality (\ref{equation12}), we deduce that
\begin{equation}
 \label{eqn16} m (\{x \in M\colon \ p\omega_x \not \subset {\mathcal K}\}) >0, \ \ \ m(A_{\mathcal K}) < m (M).\end{equation}

  On the other hand, applying Lemma \ref{lemma3bis} and the hypothesis ${\widehat A}_{{\mathcal O}_f }\subset {\widehat A}_{\mathcal K}$ Lebesgue-a.e., we deduce $$A_{{\mathcal O}_f} \subset A_{\mathcal K}  \mbox{ Lebesgue a.e.}.$$
Applying Theorem \ref{TheoremCE} and equality (\ref{equation12}), we have
$$m(A_{{\mathcal O}_f}) = m (M), \ \ \mbox{ from where we deduce } m(A_{{\mathcal K}}) = m (M),$$
contradicting the inequality at right in (\ref{eqn16}).

We have proved that  ${\mathcal O}_f \subset {\mathcal K}$. Thus ${\mathcal O}_f$ satisfies condition (b) of Definition \ref{definitionEmpiricStochasticStabilitySetK}, ending the proof of Lemma \ref{lemma4bis}.
\qed
\end{proof}

\noindent{\bf End of the proof of Theorem \ref{theoremEmpricStochasticStableSetsSubsetPseudoPhysical}}

\begin{proof}
\smartqed
We denote by ${\mathcal O}_f$ the set of all pseudo-physical measures.

\noindent (a) Let ${\mathcal K} \subset {\mathcal M}_f$ be empirically stochastically stable, according to Definition \ref{definitionEmpiricStochasticStabilitySetK}. We shall prove that ${\mathcal K} \subset {\mathcal O}_f$. Assume by contradiction that there exists $\nu \in {\mathcal K} \setminus {\mathcal O}_f$. So, $\nu$ is not pseudo-physical, and applying Definition \ref{definitionPhysicalMeasures}, there exists $\rho>0$ such that the $\rho$-weak basin $A_{\nu}^\rho$ of statistical attraction of $\nu$ has zero Lebesgue measure. In brief, after Definition \ref{definitionBasinStatisticalAttraction}, we have
$$
m(\{ x \in M\colon \ \ \mbox{dist}^*(p\omega_x, \ \nu) <\rho  \})= 0, $$ from where we deduce that \begin{equation}
\label{eqn17} p\omega_x \ \ \subset \ \ {\mathcal M}_f \setminus {\mathcal B}_{\rho}(\nu)  \ \ \ \mbox{ Lebesgue-a.e. } x \in M,\end{equation}
where ${\mathcal B}_{\rho}(\nu)$ is the open ball in the space ${\mathcal M}$ of probability measures, with center at $\nu$ and radius $\rho$.

Applying Lemma \ref{lemma3bis} and equality (\ref{equation12}) we have $$\widehat A_{\mathcal K} = A_{\mathcal K} = \{x \in X \colon \ \ p\omega_x \subset {\mathcal K}\}. $$
Joining with  assertion  (\ref{eqn17}), we deduce that
 $A_{\mathcal K} \subset A_{{\mathcal K} \setminus {\mathcal B}_{\rho}(\nu)} \ \ \mbox{Lebesgue-a.e.}$; and applying again Lemma \ref{lemma3bis} we deduce:
 $$ \widehat  A_{\mathcal K} \subset \widehat A_{{\mathcal K} \setminus {\mathcal B}_{\rho}(\nu)} \ \ \mbox{Lebesgue-a.e.}$$
But, by hypothesis ${\mathcal K}$ is empirically stochastically stable. Thus, it satisfies condition (b) of Definition \ref{definitionEmpiricStochasticStabilitySetK}. We conclude that ${\mathcal K} \ \ \subset \ \ {\mathcal K} \setminus {\mathcal B}_{\rho}(\nu)$, which is a contradiction, ending the proof of part (a) of Theorem \ref{theoremEmpricStochasticStableSetsSubsetPseudoPhysical}.

\vspace{.2cm}

\noindent (b) According to Lemma \ref{lemma4bis}, if ${\mathcal K} = {\mathcal O}_f$, then ${\mathcal K}$ is globally empirically stochastically stable. Now, let us prove the converse assertion. Assume that ${\mathcal K} $ is globally empirically stochastically stable. We shall prove that ${\mathcal K} = {\mathcal O}_f$. Applying part (a) of Theorem \ref{theoremEmpricStochasticStableSetsSubsetPseudoPhysical}, we know that ${\mathcal K} \subset {\mathcal O}_f$. So, it is enough to prove now that ${\mathcal O}_f \subset {\mathcal K}$.

By hypothesis $m(\widehat A_{\mathcal K}) = m(M)$. From Lemma \ref{lemma3bis} we have $\widehat A_{\mathcal K} =   A_{\mathcal K})$. We deduce that $m(A {\mathcal K}) = m(M)$. From this latter assertion and equality (\ref{equation12}), we obtain
$$p\omega_x \subset {\mathcal K} \ \ \mbox{for Lebesgue-a.e. } x \in M.$$
Finally, we apply the last assertion of Theorem \ref{TheoremCE} to conclude that ${\mathcal O}_f \subset {\mathcal K}$, as wanted. This ends the proof of Theorem \ref{theoremEmpricStochasticStableSetsSubsetPseudoPhysical}.
\qed
\end{proof}

\noindent{\bf Proof of Corollary \ref{corollary2}.}

\begin{proof}
\smartqed

This corollary is immediate after Theorem \ref{theoremEmpricStochasticStableSetsSubsetPseudoPhysical} and Lemma \ref{lemma4bis}. In fact,  Lemma \ref{lemma4bis} states that the set ${\mathcal O}_f$, which is composed by all the pseudo-physical measures, is globally empirically stochastically stable. And part (b) of Theorem \ref{theoremEmpricStochasticStableSetsSubsetPseudoPhysical}, states that ${\mathcal O}_f$ is the unique set of $f$-invariant measures that is globally empirically stochastically stable.
\qed
\end{proof}

Before proving Corollaries \ref{corollary3}, \ref{corollary4} and \ref{corollary5}, we recall the following known result:

\begin{theorem}

\label{theoremCE-pwconnected}
For all $x \in M$ the $p$-omega limit set $p\omega_x$ has the following property:

For any pair of measures $\mu_0, \mu_1 \in p\omega_x$ and for every real number $0 \leq \lambda \leq 1$ there exists a measure $\mu_{\lambda}$ such that $\mbox{dist}^*(\mu_0, \mu_{\lambda}) = \lambda \mbox{dist}^*(\mu_0, \mu_1)$.
\end{theorem}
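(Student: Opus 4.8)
The plan is to reduce the statement to two ingredients: that the $p$-omega limit set $p\omega_x$ is \emph{connected} in the weak$^*$-topology, and that a continuous real function on a connected set takes all intermediate values. Recall that, by Definition \ref{definitionEmpiricProbab&pomega}, $p\omega_x$ is precisely the set of accumulation points (in the compact metric space $(\mathcal M, \mbox{dist}^*)$) of the sequence $\{\sigma_{n,x}\}_{n \geq 1}$ of empiric probabilities. So the heart of the matter is a general topological fact about accumulation sets of sequences whose consecutive terms approach each other.

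First I would show that consecutive empiric probabilities become arbitrarily close:
\[
\lim_{n \to +\infty} \mbox{dist}^*(\sigma_{n,x}, \ \sigma_{n+1,x}) = 0 \qquad \mbox{for every } x \in M.
\]
To see this, write $\sigma_{n+1,x} = \frac{n}{n+1}\, \sigma_{n,x} + \frac{1}{n+1}\, \delta_{f^{n+1}(x)}$, so that for any continuous $\varphi$ with $|\varphi| \leq 1$ one has $\big|\int \varphi \, d\sigma_{n+1,x} - \int \varphi \, d\sigma_{n,x}\big| \leq \frac{2}{n+1}$. Then I would invoke the same characterization of the metric $\mbox{dist}^*$ already used in the proof of part (d) of Lemma \ref{lemma1}: given $\rho>0$, the inequality $\mbox{dist}^*(\mu, \nu) < \rho$ is guaranteed as soon as $|\int \varphi_i \, d\mu - \int \varphi_i \, d\nu|$ is smaller than a suitable $\eta>0$ for a finite family $\varphi_1, \ldots, \varphi_r$ of continuous functions (both $\eta$ and the family depending only on $\rho$). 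Since the previous estimate makes each of these finitely many differences tend to $0$, the distance $\mbox{dist}^*(\sigma_{n,x}, \sigma_{n+1,x})$ is eventually below any prescribed $\rho$.

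Next I would prove the topological lemma: \emph{if $(a_n)$ is a sequence in a compact metric space with $\mbox{dist}^*(a_n, a_{n+1}) \to 0$, then its set $L$ of accumulation points is connected}; applied to $a_n = \sigma_{n,x}$ this yields that $p\omega_x$ is connected (it is also nonempty and weak$^*$-compact, being the accumulation set of a sequence in the compact space $\mathcal M$). The proof is by contradiction: if $L = F_1 \cup F_2$ with $F_1, F_2$ nonempty, disjoint and closed, then by compactness $\delta := \mbox{dist}^*(F_1, F_2) > 0$; taking the open $\delta/3$-neighborhoods $U_1 \supset F_1$ and $U_2 \supset F_2$, one first checks that $a_n \in U_1 \cup U_2$ for all large $n$ (otherwise a subsequence outside $U_1 \cup U_2$ would accumulate at a point of $L$ lying outside $U_1\cup U_2$). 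Since $\mbox{dist}^*(a_n, a_{n+1}) < \delta/3$ for large $n$, consecutive terms cannot pass from $U_1$ to $U_2$, so the tail of $(a_n)$ stays in a single $U_i$; then the other $F_j$ would contain no accumulation point, contradicting $F_j \neq \emptyset$.

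Finally, I would fix $\mu_0, \mu_1 \in p\omega_x$ and set $D := \mbox{dist}^*(\mu_0, \mu_1)$. The map $g : p\omega_x \to \mathbb{R}$, $g(\nu) := \mbox{dist}^*(\mu_0, \nu)$, is continuous, and $p\omega_x$ is connected by the previous step, so $g(p\omega_x)$ is a subinterval of $\mathbb{R}$ containing $g(\mu_0) = 0$ and $g(\mu_1) = D$; hence $[0, D] \subset g(p\omega_x)$. Given $\lambda \in [0,1]$, the value $\lambda D \in [0,D]$ is therefore attained at some $\mu_\lambda \in p\omega_x$, i.e.\ $\mbox{dist}^*(\mu_0, \mu_\lambda) = \lambda \, \mbox{dist}^*(\mu_0, \mu_1)$, as desired. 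I expect the connectedness lemma to be the only delicate point, the rest being the elementary estimate of the first step and a direct application of the intermediate value theorem.
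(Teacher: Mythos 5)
Your proof is correct, but note that the paper itself does not prove this theorem internally: its ``proof'' is only the citation \cite[Theorem 2.1]{Polaca}, so there is no in-paper argument to compare against. The route you take --- the identity $\sigma_{n+1,x}=\frac{n}{n+1}\sigma_{n,x}+\frac{1}{n+1}\delta_{f^{n+1}(x)}$ giving $\mbox{dist}^*(\sigma_{n,x},\sigma_{n+1,x})\rightarrow 0$, the fact that the accumulation set of a sequence in a compact metric space with vanishing consecutive distances is connected (and compact, nonempty), and the intermediate value theorem applied to $\nu\mapsto\mbox{dist}^*(\mu_0,\nu)$ on $p\omega_x$ --- is precisely the standard argument behind the cited result, and it correctly yields the stronger conclusion actually needed later in the paper, namely that $\mu_\lambda$ can be taken inside $p\omega_x$.
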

\begin{proof}
See \cite[Theorem 2.1]{Polaca}.
\end{proof}

\noindent{\bf Proof of Corollary \ref{corollary3}.}

\begin{proof}
\smartqed
Assume that $\mu$ is pseudo-physical and isolated in the set ${\mathcal O}_f$ of all pseudo-physical  measures. Then, there exists $\rho >0$ such that: \begin{equation}
\label{eqn18}
\mbox{ if } \nu \in {\mathcal O}_f \mbox{ and } \mbox{dist}^*(\nu, \mu) < \rho, \ \ \mbox{ then } \ \ \nu= \mu.\end{equation}
Since $\mu $ is pseudo-physical, from Definition \ref{definitionPhysicalMeasures} we know that the $\rho$-weak basin $A^{\rho}_{\mu}$ of statistical attraction of $\mu$ has positive Lebesgue measure. From Definition \ref{definitionBasinStatisticalAttraction} we deduce that
\begin{equation}
\label{eqn19}m(A^{\rho}_{\mu} = m (\{ x \in M \colon \ \ \mbox{dist}
^*(p \omega_x, \mu) < \rho  \})>0.\end{equation}

Applying Theorem \ref{TheoremCE}, we know that $p \omega_x \subset {\mathcal O}_f$ for Lebesgue-a.e. $x \in M$.
Joining the latter assertion with (\ref{eqn18}) and (\ref{eqn19})  we deduce that $$\{\mu\}  = p \omega_x \bigcap {\mathcal B}_{\rho} {\mu} \mbox{ for Lebesgue-a.e. }  x \in A^{\rho} _{\mu},$$
where ${\mathcal B}_{\rho} {\mu}$ is the ball in the space of probability measures, with center at $\mu$ and radius $\rho$.

Besides, from Theorem \ref{theoremCE-pwconnected} we deduce that   $p \omega_x = \{\mu\}$ for Lebesgue-a.e. $x \in A^{\rho} _{\mu} $, hence for a Lebesgue-positive set of points $x \in M$. Applying Definition \ref{definitionPhysicalMeasures}, we conclude that the given pseudo-physical measure $\mu$ is physical; hence, from Theorem \ref{theoremEmpiricStochasticStableMeasure=Physical}, $\mu$ is empirically stochastically stable.
\qed
\end{proof}

 \noindent{\bf Proof of Corollary \ref{corollary4}.}

 \begin{proof}
 \smartqed
 (i) implies (ii): If the set $\mathcal O_f$ of pseudo-physical measures is finite, then all the pseudo-physical are physical due to Corollary \ref{corollary3}. Then, applying Theorem \ref{theoremEmpiricStochasticStableMeasure=Physical}, all of them are (individually) empirically stochastically stable. Besides the union of their strong basins of statistical attraction has full Lebesgue measure: In fact,  applying Definition \ref{definitionBasinStatisticalAttraction} and equality (\ref{equation12}),  that union is the  set $A_{\mathcal O_f}$; and, due to Theorem \ref{TheoremCE}, the set $A_{{\mathcal O}_f}$  has full Lebesgue measure. So, assertion (ii) is proved.

 \noindent (ii) implies (i): Assume that there exists a finite number  $r \geq 1$ of empirically stochastically stable measures $\mu_1, \mu_2, \ldots, \mu_r$ (hence, physical measures, due to Theorem \ref{theoremEmpiricStochasticStableMeasure=Physical}). Assume also that the strong basins $A_{\mu_i}$ of statistical attraction  have an union $\bigcup_{i=1}^r A_{\mu_i}$ that covers Lebesgue-a.e.. Applying    Definition \ref{definitionBasinStatisticalAttraction} and equality (\ref{equation12}), we deduce that $A_{\{\mu_1, \ldots, \mu_r\}} = \bigcup_{i=1}^r A_{\mu_i}$ has full Lebesgue measure. So, from the last assertion of Theorem \ref{TheoremCE}, ${\mathcal O}_f \subset \{\mu_1, \ldots, \mu_r\}$. In other words, the set ${\mathcal O}_f$ of pseudo-physical measures is finite, proving assertion (i).
 \qed
 \end{proof}

 \noindent{\bf Proof of Corollary \ref{corollary5}.}

 \begin{proof}
 \smartqed
 If the set ${\mathcal O}_f$ is finite, then we apply Corollary (\ref{corollary4}) to deduce that there exists a finite number of empirically stochastically stable measures, hence physical, and that the union of their strong basins of statistical attraction has full Lebesgue measure.

 Now let us consider the case for which, by hypothesis, the set ${\mathcal O}_f$ of pseudo-physical measures
   is  countably infinite. In brief: ${\mathcal O}_f = \{\mu_i\}_{i \in \mathbb{N}}$.

    Applying Theorems \ref{TheoremCE},  the $p$-omega limit sets $p\omega_x$ are contained in ${\mathcal O}_f $ for Lebesgue-a.e. $x \in M$. But, from Theorem \ref{theoremCE-pwconnected} we know that $p\omega_x$ is either a single measure or uncountably infinite. Since it is contained in the countable set ${\mathcal O}_f$, we deduce the $p\omega_x $ is composed by a single measure of ${\mathcal O}_f$ for Lebesgue-a.e. $x \in M$. Now, recalling Definition \ref{definitionBasinStatisticalAttraction} and equality (\ref{equation12}), we deduce that
  $$A_{\mathcal O_{f}} = \bigcup_{i= 1}^{+ \infty} A_{\mu_i}, \ \ \ \ \ \ \ \ \ \  \ \sum_{i= 1}^{+ \infty} m(A_{\mu_i}) = m(M).$$
Therefore, there exists finitely many or  countable infinitely many pseudo-physical measures $\mu_{i_n} \colon 1 \leq n \leq r \in \mathbb{N}^+ \cup \{+ \infty\}$ such that \begin{equation} \label{eqn20} \mu(A_{\mu_{i_n}}) >0  \ \ \forall \ 1 \leq n \leq r,  \ \ \ \ \ \ \ \ \ \ \ \ \  \ \ \sum_{n= 1}^{r} m(A_{\mu_n}) = m(M).  \end{equation}
From Definition \ref{definitionPhysicalMeasures}, each measure $\mu_{i_n}$ is physical; hence empirically stochastically stable due to Theorem \ref{theoremEmpiricStochasticStableMeasure=Physical}). Besides, from equality at right in (\ref{eqn20}), we deduce that the union   $\bigcup_{n= 1}^r A_{\mu_{i_n}}$   has full Lebesgue measure, as wanted.

Finally, to end the proof of Corollary \ref{corollary5}, let us show that the set  $\{\mu_{i_n} \colon   1 \leq n \leq r$ of physical measures above constructed, can not be   finite. In brief, let us prove that $r= + \infty$. In fact, if there existed a finite number $r \in {\mathcal N}^+$ of physical measures  whose basins of statistical attraction have an union with full Lebesgue measure, then, we would apply Corollary \ref{corollary4} and deduce that the set ${\mathcal O}_f$ of pseudo-physical measures is finite. But in our case, by hypothesis, ${\mathcal O}_f$ is countably infinite, ending the proof of Corollary \ref{corollary5}.

 \qed
 \end{proof}

\noindent{\bf  Proof of Corollary \ref{corollaryAllC1ExpandingMapsInTheCircle}.}

\begin{proof}
\smartqed
From part a) of Theorem \ref{theoremEmpricStochasticStableSetsSubsetPseudoPhysical} we know that all the measures of any empirically stochastically stable set ${\mathcal K} \subset {\mathcal M}_f$ is pseudo-physical. Besides, in \cite{Portugalia}   it is proved that, for any $C^1$ expanding map $f$ of the circle, any pseudo-physical or SRB-like measure  satisfies Pesin Entropy Formula (\ref{eqnPesinEntropyFormula}). We conclude that all the measures of ${\mathcal K}$ satisfy this formula.
\qed
\end{proof}

\noindent{\bf  Proof of Corollary \ref{corollaryC0genericMapsOfTheInteval}.}

\begin{proof}
\smartqed
From part b) of  Theorem \ref{theoremEmpricStochasticStableSetsSubsetPseudoPhysical} we know that the globally empirically stochastically stable set ${\mathcal K}$ coincides with the set ${\mathcal O}_f$ of pseudo-physical measures. Besides, in \cite{Cat-Troubetzkoy}  it is proved that, for $C^0$-generic maps $f$ of the interval,  any ergodic measure belongs to ${\mathcal O}_f$ but, nevertheless  ${\mathcal O}_f$ is a weak$^*$-closed with empty interior in the space ${\mathcal M}_f$ of invariant  measures. We conclude that all ergodic measures belong to the globally empirically stochastically stable set ${\mathcal K}$ and that this set of invariant measures is meager in ${\mathcal M}_f$, as wanted.
\qed
\end{proof}

\newpage

 \begin{acknowledgement}
We thank  IMPA of Rio de Janeiro (Brazil), and the organizing and scientific committees of the conference \lq\lq New Trends on One-Dimensional Dynamics.  Celebrating the 70th. Anniversary of Welington de Melo\rq\rq, held at IMPA in 2015. We also thank the  financial support by the Program MATHAMSUD (France, Chile and Uruguay)  and ANII (Uruguay) of the Project \lq\lq PhySeCo\rq\rq, and by CSIC- Universidad de la Rep\'{u}blica (Uruguay)  of the Group  Project N. 618,  \lq\lq Dynamical Systems\rq\rq.
\end{acknowledgement}
%


%

\end{document}